\newtheorem{theorem}{Theorem}[section]
\newtheorem{lemma}[theorem]{Lemma}
\newtheorem{proposition}[theorem]{Proposition}
\newtheorem{corollary}[theorem]{Corollary}
\theoremstyle{definition}
\newtheorem{definition}{Definition}[section]
\theoremstyle{remark}
\newtheorem{remark}[definition]{Remark}
\numberwithin{equation}{section}
\newcommand{\abs}[1]{\lvert#1\rvert}
\newcommand{\norm}[1]{\left\lVert#1\right\rVert}
\newcommand{\R}{\mathbb{R}}
\newcommand{\N}{\mathbb{N}}
\newcommand{\Z}{\mathbb{Z}}
\newcommand{\bC}{\mathbf{C}}
\newcommand{\bP}{\mathbf{P}}
\newcommand{\bp}{\mathbf{p}}
\newcommand{\bM}{\mathbf{M}}
\newcommand{\bd}{\mathbf{d}}
\newcommand{\cone}{\times\mspace{-15mu}\times}
\DeclareMathOperator{\dist}{dist}
\DeclareMathOperator{\spt}{spt}
\def\a#1{\left\llbracket{#1}\right\rrbracket}
\newcommand\res{\mathop{\hbox{\vrule height 7pt width .3pt depth 0pt\vrule height .3pt width 5pt depth 0pt}}\nolimits}
\title[Uniqueness of tangent cones]{Uniqueness of tangent cones to boundary points of two-dimensional almost-minimizing currents}
\author[J. Hirsch]{Jonas Hirsch}
\address[Jonas Hirsch]{Universität Leipzig, Mathematisches Institut, Augustusplatz 10, 04109 Leipzig, GERMANY}
\email{jonas.hirsch@math.uni-leipzig.de}
\author[M. Marini]{Michele Marini}
\address[Michele Marini]{Scuola Internazionale Superiore di Studi Avanzati, via Bonomea, 265, 34136 Trieste, ITALY}
\email{michele.marini@sissa.it}
\begin{document}
\maketitle

\begin{abstract}
We prove that tangent cones at singular boundary points of a two-dimensional current almost area minimizing are unique. Following the ideas exposed by White in \cite{White}, the result is achieved by combining a suitable epiperimetric inequality and an almost-monotonicity formula for the mass at boundary points.
\end{abstract}

\section{Introduction}

Let $T$ be an $m$-dimensional area minimizing integer rectifiable current in $\R^{m+n}$ and let $(\iota_{x,r})_\sharp T$ denote the push-forward of $T$ via the map $\iota_{x,r}:\R^{m+n}\to\R^{m+n}$ given by $z\mapsto \frac{z-x}{r}$. By using monotonicity formula one can show that the blow-ups $(\iota_{x,r})_\sharp T$ converge, up to subsequences, as $r\to 0$ to some cone $T_x$, {\it i.e.} an integral area-minimizing current such that $(\iota_{0,r})_\sharp T_x=T_x$, for every $r>0$ (see, for instance, \cite{Simon}).
\par
It is natural to wonder whether the tangent cone $T_x$ is uniquely determined, or it depends on the choice of the subsequence $r_k\to 0$. This question about the structure of the singularities of minimal surfaces has turned out to be particularly challenging and an answer has been given only in some particular situations. When $m=1$, uniqueness of tangent cones has been established in \cite{AllardAlmgren}.
For higher dimensions a possible approach is to use an epiperimetric inequality, see for instance \cite{White, DeLellis, Spolaor, Reifenberg}
\par

The case when $m=2$ has been covered by B. White in the seminal paper \cite{White}. The author provides a proof of the uniqueness of tangent cones for two-dimensional area-minimizing currents. In particular it is shown that every tangent cone satisfies an {\it epiperimetric inequality} (see \cite[Definition 2]{White} and compare it with our Lemma \ref{lem.boundary epiperimetric I} in Section \ref{sec.epiperimetric}). A tangent cone $T_x$ satisfies an epiperimetric inequality provided the difference of the mass in a ball $B$ between $T_x$ and that of a minimal surface $H$ is bounded by the difference of the masses of $T$ and the cone generated by the boundary of $H$, namely
\[
\norm{H}(B)-\norm{T_x}(B)\le(1-\varepsilon)\left(\norm{0 \cone \partial H}(B) - \norm{T_x}(B)\right),
\]
whenever $\partial H\in \mathbf I_1(\partial B)$ is sufficiently close to $\partial (T\res B)$.
\par
A crucial step for the proof of uniqueness is to show a decay of the flat distance between $T$ and an area minimizing cone, by combining the epiperimetric inequality and the monotonicity formula.\\
\par

In \cite{DeLellis}, White's technique has been suitably adapted for the case {\it almost (area) minimizing} two dimensional currents, and the same result of uniqueness of tangent cones at singular point has been established in this more general setting.\\
\par

In this paper we consider boundary points of two-dimensional almost-minimizing currents and, by relying on the ideas and computations exposed in \cite{White} and \cite{DeLellis}, we prove that, again, tangent cones are uniques.
\par
Before stating our main theorem let us give the exact definition of almost-minimizers we will make use of.

\begin{definition}\label{def.almost minimzing}
An $m$-dimensional integer rectifiable current $T$ in $\R^{m+n}$ with boundary $\Gamma$ will be called \emph{almost (area) minimizing at $x \in \spt(T)$} if there are constants $C_0, r_0, \alpha_0>0$ such that 
\begin{equation}\label{eq:almost minimizing}
\norm{T}(B_r(x)) \le (1+C_0 r^{\alpha_0})\norm{T + \partial Q}(B_r(x))
\end{equation}
for all $0<r<r_0$ and all integral $(m+1)$-dimensional currents $Q$ supported in $B_r(x)$.
\\
The current is called \emph{almost (area) minimizing} in a open set $U$ if the current $T$ is \emph{almost (area) minimizing at each $x \in \spt(T)\cap U$}.
\end{definition}

For such currents, in Section \ref{sec.final}, we will show the following
\begin{theorem}\label{thm.uniqueness}
Assume that $T$ is a two-dimensional integer rectifiable current in $\R^{2+n}$ with $C^{1,\alpha}$-boundary $\Gamma$. If $T$ is almost (area) minimizing in some open set $U$, then, for every $x\in U\cap \Gamma$, there exists a unique area minimizing cone $S$ such that $0\in\partial S$ and
\[
(\iota_{x,r})_\sharp T\to S,
\]
in the sense of currents.
\end{theorem}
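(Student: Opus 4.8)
The proof follows the scheme of White \cite{White}, in the form adapted to almost minimizers in \cite{DeLellis}, transported to the boundary. Fix $x\in U\cap\Gamma$ and assume $x=0$. The plan has three stages: \textbf{(i)} produce subsequential tangent cones and record their structure; \textbf{(ii)} prove a power-rate decay of a suitable mass excess by coupling the boundary epiperimetric inequality of Lemma \ref{lem.boundary epiperimetric I} with the almost-minimality \eqref{eq:almost minimizing}; \textbf{(iii)} deduce from that decay that the blow-ups $T_r:=(\iota_{0,r})_\sharp T$ form a Cauchy family in the flat topology as $r\to 0$, whence the limit is unique. For stage \textbf{(i)}, the boundary version of the almost-monotonicity formula --- here the hypothesis $\Gamma\in C^{1,\alpha}$ enters, forcing $(\iota_{0,r})_\sharp\Gamma\to\a{\ell}$ in $C^1_{\mathrm{loc}}$ at rate $r^\alpha$ with $\ell:=T_0\Gamma$ --- shows that $r\mapsto r^{-2}\norm{T}(B_r)$, corrected by an additive term of size $Cr^{\gamma_0}$ for some $\gamma_0>0$, is monotone; hence $\Theta:=\lim_{r\to 0}\pi^{-1}r^{-2}\norm{T}(B_r)$ exists. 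Standard compactness for almost minimizers then gives that any sequence $r_k\to 0$ has a subsequence along which $T_{r_k}\to S$ in the flat topology, where $S$ is area minimizing (the error $C_0(\rho r_k)^{\alpha_0}$ at scale $\rho r_k$ vanishes), is a cone (in the limit the density ratio is the constant $\pi\Theta$, so the rigidity case of monotonicity applies), and satisfies $\partial S=\a{\ell}$, $0\in\partial S$. Denoting by $\mathcal C$ the set of all such cones, the same compactness makes $\mathcal C$ compact in the flat topology on $B_1$, and a routine contradiction argument gives $\dist_{\mathbf F,B_1}(T_r,\mathcal C)\to 0$. What remains is to upgrade this to convergence to a single cone.

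For stage \textbf{(ii)}, work in $B:=B_1$, put $m(r)=\norm{T}(B_r)$, let $\langle T_r,1\rangle$ denote the slice of $T_r$ by $\partial B_1$, and set
\[
\mathbf e(r):=\frac{m(r)}{r^2}-\pi\Theta,\qquad \mathbf D(r):=\norm{0\cone\langle T_r,1\rangle}(B)-\norm{T_r}(B),
\]
so $\mathbf D(r)$ is the mass of the competitor cone over the slice of $T_r$ minus the mass of $T_r\res B$, and $\mathbf D(r)\ge -Cr^{\gamma_0}$ by \eqref{eq:almost minimizing} (using that cone as competitor, up to the $C^{1,\alpha}$-error of $\Gamma$). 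Since $\mathbf M(\langle T,\abs{\cdot},r\rangle)=2r^{-1}m(r)+2r\mathbf D(r)$ (rescaling and the cone-mass formula) and $m'(r)\ge \mathbf M(\langle T,\abs{\cdot},r\rangle)$ by slicing,
\[
\frac{d}{dr}\,\mathbf e(r)=\frac{r\,m'(r)-2m(r)}{r^3}\ \ge\ \frac{2}{r}\,\mathbf D(r).
\]
Because $\dist_{\mathbf F,B}(T_r,\mathcal C)\to 0$ and $\mathcal C$ is compact, for $r$ small Lemma \ref{lem.boundary epiperimetric I} applies with a uniform $\varepsilon>0$: it yields a competitor $H$ with $\partial H=\partial(T_r\res B)$ and $\norm{H}(B)-\pi\Theta\le(1-\varepsilon)(\norm{0\cone\langle T_r,1\rangle}(B)-\pi\Theta)=(1-\varepsilon)(\mathbf D(r)+\mathbf e(r))$, while, the rescaling of $H$ being an admissible comparison for $T$ in $B_r$, \eqref{eq:almost minimizing} also gives $\mathbf e(r)+\pi\Theta\le(1+C_0 r^{\alpha_0})\norm{H}(B)$. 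Combining the two and absorbing the $C^{1,\alpha}$-error we obtain, for some $\gamma_1>0$,
\[
\mathbf e(r)\ \le\ (1-\varepsilon)\bigl(\mathbf D(r)+\mathbf e(r)\bigr)+Cr^{\gamma_1},\qquad\text{i.e.}\qquad \mathbf D(r)\ \ge\ \frac{\varepsilon}{1-\varepsilon}\,\mathbf e(r)-Cr^{\gamma_1}.
\]
Inserting this into the previous display gives $\frac{d}{dr}\mathbf e(r)\ge\frac{\beta}{r}\mathbf e(r)-Cr^{\gamma_1-1}$ with $\beta:=\frac{2\varepsilon}{1-\varepsilon}>0$; multiplying by the integrating factor $r^{-\beta}$, integrating from $r$ to a fixed small $r_1$, and using $\mathbf e(r)\to 0$ and $\mathbf e(r)\ge -Cr^{\gamma_0}$, we conclude the decay $\mathbf e(r)\le Cr^{\gamma}$ with $\gamma:=\tfrac12\min\{\beta,\gamma_0,\gamma_1\}>0$.

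For stage \textbf{(iii)}, integrating $\frac{d}{dr}\mathbf e(r)\ge\frac{2}{r}\mathbf D(r)$ over the dyadic interval $[\tfrac r2,r]$ and using the decay just proved yields $\int_{r/2}^{r}s^{-1}(\mathbf D(s)+Cs^{\gamma_0})\,ds\le Cr^{\gamma}$. Now invoke the classical estimate bounding the flat distance between nearby blow-ups by the square root of the conical deficit,
\[
\mathbf F\bigl((T_r-T_{r'})\res B\bigr)\ \le\ C\int_{r'}^{r}\frac{(\mathbf D(s)+Cs^{\gamma_0})^{1/2}}{s}\,ds\qquad(0<r'<r<r_1),
\]
and estimate the right-hand side dyadically: on $[2^{-j-1}r_1,2^{-j}r_1]$ the Cauchy--Schwarz inequality together with the previous bound gives a contribution $\le C(2^{-j}r_1)^{\gamma/2}$, so the right-hand side is dominated by the tail of a convergent geometric series and tends to $0$ as $r,r'\to 0$. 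Hence $\{T_r\res B\}_{r\to 0}$ is Cauchy in the flat norm and converges to a single current $S$; by stage \textbf{(i)}, $S\in\mathcal C$, i.e.\ $S$ is an area minimizing cone with $0\in\partial S$. This is exactly the statement of Theorem \ref{thm.uniqueness}.

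Two points carry the weight of the argument. The first, the boundary epiperimetric inequality itself (Lemma \ref{lem.boundary epiperimetric I}), is established separately in Section \ref{sec.epiperimetric}, and is where the geometry of two-dimensional area minimizing cones at a boundary point is really used. The second, internal to the present proof, is the bookkeeping of the boundary: since $\Gamma$ is only $C^{1,\alpha}$ and not a line, every cone comparison and every slicing identity above holds only up to an error of order $r^\alpha$, which must be shown to be as harmless as the almost-minimizing error $r^{\alpha_0}$; moreover Lemma \ref{lem.boundary epiperimetric I} may be invoked only where the boundary trace of $T_r$ is close to that of a cone in $\mathcal C$, which is precisely why one first establishes $\dist_{\mathbf F,B}(T_r,\mathcal C)\to 0$ through monotonicity and compactness, rather than attempting to keep $T_r$ close to one fixed cone by hand.
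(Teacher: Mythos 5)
Your architecture is the one the paper actually follows (compactness of blow-ups via the boundary almost-monotonicity, epiperimetric inequality $\Rightarrow$ power decay of the spherical excess, dyadic Cauchy estimate in the flat norm), but two steps that carry real weight are quoted rather than proved, and one of them is stated in a form that is not the classical estimate and does not follow from it. First, in stage (ii) you invoke Lemma \ref{lem.boundary epiperimetric I} for $T_r$ as soon as $T_r$ is flat-close to the compact family $\mathcal C$. That lemma, however, has hypotheses at the level of the slice $Z=\langle T_r,d,1\rangle$ (flat distance to $R$, mass gap, Hausdorff closeness of supports), and the competitor $H$ it produces is built for the \emph{straight} boundary $\Gamma_0$, so it is not admissible for $T_r$, whose boundary is the curved $\Gamma_r$. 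Passing from flat closeness of the currents to the slice-level hypotheses (mass convergence, density lower bounds giving Hausdorff convergence of supports, flat continuity of slices) and repairing the competitor via the straightening map of Lemma \ref{lem.straight boundary} at the price of the $C_4r^{\alpha_4}$ error is exactly the content of Proposition \ref{prop:boundary epi-perimetric}, which is proved by a compactness argument; as written, your stage (ii) assumes it.

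The more serious gap is the ``classical estimate'' in stage (iii), $\mathcal F\bigl((T_r-T_{r'})\res B\bigr)\le C\int_{r'}^{r}s^{-1}(\mathbf D(s)+Cs^{\gamma_0})^{1/2}\,ds$. Note the identity $\tfrac{d}{ds}\mathbf e(s)=s^{-2}\bigl(\norm{T}'(B_s)-\bM(\langle T,d,s\rangle)\bigr)+2s^{-1}\mathbf D(s)$ (up to the almost-minimizing errors): the excess change splits into the almost-monotonicity drop \emph{plus} the conical deficit $\mathbf D$, and it is the drop, not $\mathbf D$, that controls the radial deviation $\bM(i_\sharp T\res B_s\setminus B_{r'})$ and hence the motion of the blow-ups; there is no classical bound of the flat distance by $\int\sqrt{\mathbf D}\,s^{-1}ds$, and it cannot be derived from the standard one since the two terms are complementary parts of $\mathbf e(r)-\mathbf e(r')$. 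The correct route (the paper's Steps 3--4 together with Corollary \ref{cor:continuity of the slices in flat norm}) estimates $\bM(i_\sharp T\res B_s\setminus B_{r'})$ by Cauchy--Schwarz against the drop in \eqref{eq:almost monotonicity formula}, with a $\sqrt{\ln(s/r')}$ mass factor and an extra $Cs^{\alpha_1}$ boundary term from $\Gamma$, and then converts this slice/radial control into control of $\bd_{B_1}(T_{r'},T_r)$ by the explicit homotopy current $([\epsilon,1]\times T)\res\{r'\le f< s\}$ pushed forward by $H(t,x)=t\,x/\abs{x}$, including the boundary contribution $R_\Gamma$; none of this is a quotable black box in the $C^{1,\alpha}$-boundary setting. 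The fix is mechanical given your stage (ii): on a dyadic annulus the drop is bounded by $\mathbf e(2a)+Ca^{\gamma_0}\le Ca^{\gamma}$, each scale contributes $Ca^{\gamma/2}$, and summation gives the Cauchy property --- but with the drop, not $\mathbf D$, under the square root, and with the homotopy construction supplied.
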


To prove Theorem \ref{thm.uniqueness} we closely follow the strategy explained in \cite{White} and \cite{DeLellis}. In particular, in Section \ref{sec.monotonicity}, we derive an {\it almost monotonicity formula} (see Proposition \ref{prop:almost monotonicity formula}) for boundary points which will allow us to deal with blow-ups and to prove the decay of some flat norm along the blow up sequence.
\par
In Section \ref{sec.epiperimetric}, we establish an epiperimetric inequality for almost minimizing currents by adapting White's construction of a comparison surface. 
\par
In Section \ref{sec.final}, we finally show how to combine the results obtained in the first part of the paper in order to prove a refined version of Theorem \ref{thm.uniqueness}.
\par
In Section \ref{sec.approximation}, (see Lemma \ref{lem:Lipschitz approximation}) we explain how to slightly modify a rectifiable curve to obtain new ones with bounded Lipschitz constants, taking the same boundary values, and preserving symmetries.
\par
We conclude the paper by showing three instances of currents satisfying the assumptions of Definition \ref{def.almost minimzing}, compare also with the examples provided in \cite{DeLellis}.\\

We refer to the booktexts \cite{Federer} and \cite{Simon} for the notations and the basic definitions. In particular, we will use the short hand notation $T_{x,r}$ for $(\iota_{x,r})_\sharp T$ and only $T_r$ if $x=0$. The euclidean distance from a point $y$ will be denoted by $d_y$ i.e. $d_y(x):= \abs{x-y}$. In case of $y=0$ we will only write $d$ i.e. $d(x)=d_0(x)$. We will use a couple of times the projection $i$ onto the sphere i.e. $i(x) = \frac{x}{\abs{x}}$. Finally, we recall the definition of the flat distance between $T,S \in \mathbf{I}_m(B_{R+1})$, compare \cite[Section 6.7]{Simon}, 
\begin{equation}\label{eq: flat norm distance}
\bd_{B_R}(T,S)= \inf\{ \norm{R}(B_R) + \norm{Q}(B_R) \colon T-S = R+ \partial Q \text{ in } B_{R+1}\},
\end{equation}
with $R \in \mathbf{I}_m(B_{R+1})$ and $Q \in \mathbf{I}_{m+1}(B_{R+1})$.

\section*{Acknowledgements}
The authors would like to thank SISSA, for the support to the first author to visit Trieste. Furthermore they thank the HIM institute for their kind hospitality during the trimester program "evolution of interface". During these visits parts of the project had been discussed. The work of the second author is supported by the MIUR-grant {\it``Geometric
Variational Problems''} (RBSI14RVEZ).

\section{Almost Monotonicity at boundary points}\label{sec.monotonicity}

Since the results presented in this section are not affected by the dimension of the current, we will present them in full generality. Throughout this section, then, $T$ will be an $m$-dimensional almost-minimizing current in $\R^{m+n}$.\\

The following observation concerning the existence of a diffeomorphism that straightens the boundary will be a helpful tool throughout all the arguments. 

\begin{lemma}\label{lem.straight boundary}
Let $\Gamma$ be a $C^{1,\alpha_1}$ submanifold of dimension $m-1$. We assume that $\Gamma \res B_{r_1}$ is the graph of an entire function $\gamma \in C^{1,\alpha_1}(\R^{m-1}, \R^{n+1})$ with $\gamma(0)=0, D\gamma(0)=0$ i.e. $\Gamma \res B_{r_1} = \mathbf{G}_\gamma \res B_{r_1}$. If	$\norm{\gamma}_{C^{1,\alpha_1}}<\epsilon_1$. Then there exists a function $\phi: B_{r_1}(x) \to B_{r_1}(x)$ and constants $C_1, \alpha_1$ depending on $n, m, \Gamma$ with the properties that
\begin{itemize}
	\item[(i)] $\phi(x)=x$ and $\phi(\partial B_r) = \partial B_r$ for all $0<r< r_1$;
	\item[(ii)] $\phi(\Gamma\cap B_{r_1}) = \Gamma_0\cap B_{r_1}$ where $\Gamma_0=\{ x_l =0 \colon l \ge m \}$;
	\item[(iii)] $\frac{\abs{\phi(x)}}{\abs{x}} + \norm{D\phi(x)-\mathbf{1}}\le C_1 \abs{x}^{\alpha_1}$ for all $\abs{x}<r_1$.   
\end{itemize}
\end{lemma}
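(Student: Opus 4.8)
The plan is to obtain $\phi$ as a composition of two explicit maps: a vertical shear that flattens $\Gamma$ onto $\Gamma_0$, followed by a radial rescaling that restores each sphere $\partial B_r$ while leaving $\Gamma_0$ (a linear subspace) untouched. Throughout I write $z=(x',x'')\in\R^{m-1}\times\R^{n+1}=\R^{m+n}$, so that $\mathbf{G}_\gamma=\{(x',\gamma(x'))\colon x'\in\R^{m-1}\}$ and $\Gamma_0=\R^{m-1}\times\{0\}$. (I read the first summand in (iii) as $\abs{\phi(x)-x}/\abs{x}$, since $\phi$ will preserve $\abs{\,\cdot\,}$.)

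First I would introduce the shear $\psi(x',x''):=(x',\,x''-\gamma(x'))$, a global $C^{1,\alpha_1}$-diffeomorphism of $\R^{m+n}$ with $\psi(0)=0$, $\psi^{-1}(y',y'')=(y',\,y''+\gamma(y'))$, which carries $\mathbf{G}_\gamma$ onto $\Gamma_0$. Because $\gamma(0)=0$, $D\gamma(0)=0$ and $\norm{\gamma}_{C^{1,\alpha_1}}<\epsilon_1$, Taylor's inequality gives, for $\abs z<r_1$,
\[
\abs{\psi(z)-z}=\abs{\gamma(x')}\le\epsilon_1\abs z^{1+\alpha_1},\qquad \norm{D\psi(z)-\mathbf 1}=\abs{D\gamma(x')}\le\epsilon_1\abs z^{\alpha_1},
\]
and in particular $\tfrac12\abs z\le\abs{\psi(z)}\le 2\abs z$ on $B_{r_1}$, after possibly shrinking $r_1$ in terms of $\Gamma$. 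Thus $\psi$ does all of the flattening, at the price of distorting the spheres centred at the origin.

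The correction is then $\phi(0):=0$ and $\phi(z):=\abs z\,\psi(z)/\abs{\psi(z)}$ for $0<\abs z<r_1$. By construction $\abs{\phi(z)}=\abs z$, so $\phi$ maps $B_{r_1}$ into itself, fixes $0$, and sends each $\partial B_r$ into itself; and since $\Gamma_0$ is invariant under positive dilations, $\phi(z)\in\Gamma_0\iff\psi(z)\in\Gamma_0\iff z\in\psi^{-1}(\Gamma_0)=\mathbf{G}_\gamma=\Gamma$ throughout $B_{r_1}$, which is (ii) once $\phi$ is known to be a bijection. For the bounds (iii) I would use the elementary inequality $\bigl\lvert a/\abs a-b/\abs b\bigr\rvert\le 2\abs{a-b}/\max(\abs a,\abs b)$: with $a=\psi(z)$, $b=z$ this yields $\abs{\phi(z)-z}\le 2\epsilon_1\abs z^{1+\alpha_1}$; and differentiating $\phi(z)=\abs z\,g(\psi(z))$ with $g(v)=v/\abs v$, $Dg(v)=\abs v^{-1}(\mathbf 1-v\otimes v/\abs v^2)$, and feeding in the bounds on $\psi,D\psi$ together with the Lipschitz dependence of $v\mapsto v/\abs v$ and $v\mapsto v\otimes v/\abs v^2$ on the direction $v/\abs v$, one obtains $\norm{D\phi(z)-\mathbf 1}\le C_1\abs z^{\alpha_1}$ on $0<\abs z<r_1$, with $C_1$ depending only on $m,n,\Gamma$. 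These two estimates also show that $\phi$ is differentiable at $0$ with $D\phi(0)=\mathbf 1$ and that $D\phi$ is continuous across $0$, so $\phi\in C^1(B_{r_1})$ and (iii) holds. Finally, to upgrade the inclusions in (i)--(ii) to equalities, observe that for each $r<r_1$ the rescaled map $S^{m+n-1}\ni\omega\mapsto\psi(r\omega)/\abs{\psi(r\omega)}$ is, by the above, $C^1$-close to the identity of the unit sphere (distance $\le C_1 r^{\alpha_1}$), hence a diffeomorphism of it; therefore $\phi(\partial B_r)=\partial B_r$, $\phi$ is a bijection of $B_{r_1}$ with everywhere invertible differential, i.e.\ a $C^1$-diffeomorphism, and $\phi(\Gamma\cap B_{r_1})=\Gamma_0\cap B_{r_1}$.

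I expect the one genuinely delicate point to be the behaviour at the centre $z=0$: the factor $\abs z$ is merely Lipschitz there and $g(v)=v/\abs v$ is singular at the origin, so one must check with some care that the product $\abs z\,g(\psi(z))$ really is $C^1$ up to $0$ with differential $\mathbf 1$ — and this is precisely where the normalization $\gamma(0)=0$, $D\gamma(0)=0$ (equivalently $\psi(z)=z+O(\abs z^{1+\alpha_1})$) enters, since it is what produces the decay rate $\abs z^{\alpha_1}$ in (iii). Everything else — verifying $\psi^{-1}(\Gamma_0)=\mathbf{G}_\gamma$, the two Lipschitz estimates for the radial projection, and tracking the constants so that they depend only on $n,m,\Gamma$ — is routine bookkeeping.
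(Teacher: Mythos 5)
Your construction is correct, and it takes a genuinely different route from the paper's. The paper never writes a global shear plus radial correction: it fixes a small angle $\theta$, builds a one-homogeneous Lipschitz map $\Phi$ sending each spherical cap $\partial B_r\cap\{\abs{z}\le\sin\theta\,r\}$ onto a piece of the cylinder $\{\abs{y}=\sin\theta\,r\}$, and conjugates the cut-off graph shear $F(y,z)=(y,\,z+\eta(\abs{z}/\abs{x})\gamma(y))$ by $\Phi$; since $F$ preserves cylinders $\{\abs{y}=\mathrm{const}\}$ and is the identity outside a conical neighbourhood of $\Gamma_0$, the conjugate $\psi=\Phi^{-1}\circ F\circ\Phi$ preserves every sphere by construction and carries $\Gamma_0$ onto $\mathbf{G}_\gamma$, and the paper takes $\phi=\psi^{-1}$. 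You instead flatten first with the global shear $(x',x'')\mapsto(x',x''-\gamma(x'))$ and then restore the spheres by the radial renormalization $z\mapsto\abs{z}\,\psi(z)/\abs{\psi(z)}$, the key structural point being that $\Gamma_0$ is dilation-invariant, so the renormalization does not undo the flattening; the decay in (iii) then comes from $\gamma(0)=0$, $D\gamma(0)=0$ and the $C^{1,\alpha_1}$ smallness, exactly as you compute. Your version is more explicit and elementary (no angle/cut-off bookkeeping, no smooth approximation of the auxiliary homogeneous map), yields a map that is $C^1$ up to the origin with $D\phi(0)=\mathbf 1$, and your verification of the delicate point at $z=0$ via $\psi(z)=z+O(\abs{z}^{1+\alpha_1})$ is sound; what the paper's conjugation buys in exchange is that sphere preservation is automatic (no derivative computation for the radial projection) and that its map equals the identity away from a conical neighbourhood of $\Gamma_0$, which is occasionally convenient later. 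Two minor points you handled correctly and should keep: reading the first summand in (iii) as $\abs{\phi(x)-x}/\abs{x}$ (as literally written the bound fails for any sphere-preserving map, including the paper's own), and the surjectivity $\phi(\partial B_r)=\partial B_r$ via the $C^1$-small perturbation of the identity of the sphere, which requires $\epsilon_1$ (or $r_1$) small — the same smallness the paper invokes to confine $\mathbf{G}_\gamma$ to a thin cone.
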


\begin{proof}
We will use the notation $x=(y,z) \in \R^{m-1}\times \R^{n+1}$, furthermore we fix a small angle $0<\theta< \frac\pi8$. 	
Let $\Phi_1$ be smooth approximation of 
\[\tilde{\Phi}_1(x):= \begin{cases}
 x & \text{ if } \abs{z} \ge \sin(2\theta) \abs{x}\\
 (\cos(2\theta) \frac{y}{\abs{y}}, z) 	& \text{ if } \abs{z} \le \sin(2\theta) \abs{x}
 \end{cases}
 \]
 such that $\Phi_1(x) = \tilde{\Phi}_1(x)$ if $\abs{x}>\frac{1}{10}$ and $\abs{z}\le \sin(\theta) \abs{x}$ or $\abs{z} \ge \sin(3\theta) \abs{x}$. Now let $\Phi(x):=\abs{x} \Phi_1(\frac{x}{\abs{x}})$ its one-homogenous extension. Note that $\Phi$ is smooth outside of $0$ and Lipschitz continuous on $\R^{n+m}$ and 
 \[ \Phi: \partial B_r \cap \{ \abs{z}\le \sin(\theta) r \} \to \partial \mathbf{C}_{\sin(\theta)r} \cap \{ \abs{z}\le \sin(\theta) r \}\,\]
 is a smooth diffeomorphism.
 Here $\mathbf{C}_{\sin(\theta)r}$ denotes the cylinder $\{ \abs{y}= \sin(\theta) r \}$. 
 If $\epsilon_1 >0$ is sufficient small, we ensure that $\spt(\mathbf{G}_\gamma) \subset \{ \abs{z}\le \sin(\frac{\theta}{4}) \abs{x} \}$. Furthermore $\Phi$ maps $\Gamma_0$ into $\Gamma_0$. Fix a non-negative smooth function $\eta$ with $\eta=1$ for $t\le \sin(\frac{\theta}{4})$ and $\eta=0$ for $t\ge \sin(\frac{\theta}{2})$. Now we define the smooth diffeomorphism $F(x):= \left(y, z+ \eta\left(\frac{\abs{z}}{\abs{x}}\right) \gamma(y) \right)$. Since $y\mapsto F(y,0)$ is a parametrization of $\mathbf{G}_\gamma$ we have that 
 \[ \psi(x):= \Phi^{-1} \circ F \circ \Phi\]
 maps $\Gamma_0$ onto $\mathbf{G}_\gamma$. Since $F$ satisfies the bounds $(iii)$ we conclude that $\psi$ satisfies similar bounds. Its inverse $\phi=\psi^{-1}$ has the desired properties.
\end{proof}

In the proof of the almost monotonicity formula the following small observation on measures is helpful. 
\begin{lemma}\label{lem.integration of measures}
	Let $\mu$ be a (non-negative) Radon measure on $\R^n$ and $f$ be a $C^1$ function on the interval $[a,b]$; then the following identity holds true
	\begin{equation}\label{eq:integration of measures}
		\int_a^b f(t) \frac{d}{dt}\mu(B_t) \, dt = \int_{B_b\setminus B_a} f(\abs{x}) \, d\mu. 
	\end{equation}
\end{lemma}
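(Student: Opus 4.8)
\emph{Proof proposal.} The plan is to recognise both sides of \eqref{eq:integration of measures} as the integral of $f$ against one and the same measure on $[0,\infty)$, namely the push-forward of $\mu$ under the radial projection; once this is done, the identity reduces to the change-of-variables formula for image measures together with the elementary observation that $t\mapsto\mu(B_t)$ is the distribution function of that push-forward, which is precisely the meaning of the symbol $\frac{d}{dt}\mu(B_t)\,dt$.

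Concretely, I would introduce $\pi:\R^n\to[0,\infty)$, $\pi(x):=\abs{x}$, and the image measure $\nu:=\pi_\sharp\mu$, defined by $\nu(A):=\mu(\pi^{-1}(A))$ for Borel $A\subset[0,\infty)$; since $\mu$ is Radon and $\pi$ pulls bounded sets back to bounded sets, $\nu$ is a locally finite Borel measure. As $\pi^{-1}([0,t))=B_t$, we get $\mu(B_t)=\nu([0,t))$, so $t\mapsto\mu(B_t)$ is (the left-continuous version of) the distribution function of $\nu$, and hence the Lebesgue--Stieltjes measure $\frac{d}{dt}\mu(B_t)\,dt$ coincides with $\nu$ on $(0,\infty)$. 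The left-hand side of \eqref{eq:integration of measures} therefore equals $\int_{[a,b)}f\,d\nu$, and the image-measure formula gives $\int_{[a,b)}f\,d\nu=\int_{\pi^{-1}([a,b))}f(\abs{x})\,d\mu=\int_{B_b\setminus B_a}f(\abs{x})\,d\mu$, which is the right-hand side. If one prefers to avoid push-forwards, the same computation can be carried out by hand: first prove the identity for $f=\chi_{[c,d)}$ with $a\le c\le d\le b$, where it reduces to the additivity relation $\mu(B_d)-\mu(B_c)=\mu(B_d\setminus B_c)$; then extend to step functions by linearity; and finally pass to an arbitrary $f\in C^1([a,b])$ by uniform approximation, using that $f$ is bounded, $\mu(B_b)<\infty$, and the total variation of $t\mapsto\mu(B_t)$ on $[a,b]$ equals $\mu(B_b)-\mu(B_a)<\infty$.

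The only point that requires genuine care is the bookkeeping for the spheres: the function $t\mapsto\mu(B_t)$ need not be continuous, since $\mu(\partial B_t)$ may be positive, but this happens for at most countably many $t$. Working consistently with the left-continuous version of $t\mapsto\mu(B_t)$ and with half-open radial intervals makes the atom of $\frac{d}{dt}\mu(B_t)\,dt$ at such a $t$ carry exactly the mass $\mu(\partial B_t)$, which is precisely its contribution to the right-hand side. Fixing these endpoint conventions correctly is the main (minor) obstacle; the remainder is routine measure theory.
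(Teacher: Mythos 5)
Your argument is correct, and it takes a genuinely different route from the paper. The paper never identifies the derivative measure explicitly: it uses the product rule for BV functions to write $\frac{d}{dt}\bigl(f(t)\mu(B_t)\bigr)=f'(t)\mu(B_t)+f(t)\frac{d}{dt}\mu(B_t)$, integrates, and then evaluates $\int_a^b f'(t)\mu(B_t)\,dt$ by Fubini, so that only the pointwise values $\mu(B_t)$ ever enter; this is where the hypothesis $f\in C^1$ is actually used. You instead identify the Lebesgue--Stieltjes measure $\frac{d}{dt}\mu(B_t)\,dt$ with the push-forward $\pi_\sharp\mu$, $\pi(x)=\abs{x}$ (using that $\mu(B_t)=\pi_\sharp\mu([0,t))$ is the left-continuous distribution function of $\pi_\sharp\mu$), and then the identity is the image-measure change-of-variables formula, or equivalently the step-function approximation you sketch. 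Your route is more conceptual, needs only boundedness and continuity of $f$ rather than $f\in C^1$, and makes transparent where spheres of positive $\mu$-measure end up --- the endpoint/half-open-interval bookkeeping you flag is exactly the price for this, and you handle it correctly, consistent with the paper's implicit convention ($B_b\setminus B_a=\{a\le\abs{x}<b\}$, visible in the paper's Fubini step via $\mathbf{1}_{B_t}(x)=\mathbf{1}_{\{\abs{x}<t\}}$). The paper's integration-by-parts route buys the convenience of never having to discuss the derivative measure or its atoms at all, at the cost of invoking the BV product rule and the $C^1$ assumption.
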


\begin{proof}
Note that since the function $t \mapsto \mu(B_t)$ is monotone increasing, it is a BV function and its derivate $\frac{d}{dt}\mu(B_t)$ is a non-negative measure for which the fundamental theorem of calculus holds. Furthermore since the chain rule holds for BV functions, we have that $\frac{d}{dt} (f(t)\mu(B_t)) = f'(t) \mu(B_t) + f(t) \frac{d}{dt} \mu(B_t)$. Rearranging and integrating between $a$ and $b$ gives
\begin{equation}\label{eq. chain for BV}
 \int_a^b f(t) \frac{d}{dt}\mu(B_t) \, dt = f(b)\mu(B_b) - f(a)\mu(B_a) + \int_a^b f'(t) \mu(B_t)\, dt. 
 \end{equation}
Let us consider the last integral. Since $f'$ is continuous we can apply Fubini's theorem and obtain
\begin{align*}
	\int_a^b f'(t) \mu(B_t)\, dt &= \int_{\R^n}\int_a^b f'(t) \mathbf{1}_{B_t}(x)\, dt d\mu = \int_{B_b} \int_{\max(\abs{x},a)}^b f'(t)\, dt d\mu \\
	&= \int_{B_b\setminus B_a} \int_{\abs{x}}^b f'(t)\, dt d\mu + \int_{B_a} \int_{a}^bf'(t)\,dt d\mu \\
	&=f(b) \mu(B_b\setminus B_a) - \int_{B_b\setminus B_a} f(\abs{x}) \, d\mu + (f(b)-f(a))\mu(B_a)\\
	&=f(b) \mu(B_b) - f(a) \mu(B_a) - \int_{B_b\setminus B_a} f(\abs{x}) \, d\mu.
\end{align*}
By combining the above identity with \eqref{eq. chain for BV}, it then follows the validity of \eqref{eq:integration of measures}.
\end{proof}

We are now in position to state and prove the boundary version of the almost monotonicity formula.
\begin{proposition}\label{prop:almost monotonicity formula}
Let $T$ be \emph{almost minimizing} at $x \in \spt(T)\cap \Gamma$ with $C^{1,\alpha_1}$-boundary $\Gamma$. Then there are constants $C_3, r_3, \alpha_3$ depending on $C_0,r_0,\alpha_0,\Gamma$ such that 
\begin{equation}\label{eq:almost monotonicity formula}
e^{C_3r^{\alpha_3}}\frac{\norm{T}(B_r(x))}{r^m} - 	e^{C_3s^{\alpha_3}}\frac{\norm{T}(B_s(x))}{s^m} \ge \int_{B_r(x) \setminus B_s(x)} e^{C_3\abs{z-x}^{\alpha_3}} \frac{\abs{(z-x)^\perp}^2}{2\abs{z-x}^{m+2}} \, d\norm{T}(z)\,.
\end{equation}
\end{proposition}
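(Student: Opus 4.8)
The plan is to adapt the classical interior monotonicity formula of Allard to the present boundary setting, using the almost-minimality hypothesis \eqref{eq:almost minimizing} to absorb the error terms. First I would straighten the boundary: after translating so that $x=0$ and rescaling, Lemma \ref{lem.straight boundary} provides a diffeomorphism $\phi$ taking $\Gamma$ to the flat plane $\Gamma_0 = \{x_l = 0 : l \ge m\}$ with the quantitative estimate $\frac{|\phi(x)|}{|x|} + \|D\phi(x) - \mathbf{1}\| \le C_1|x|^{\alpha_1}$; pushing $T$ forward by $\phi$ changes the mass ratios and the integrand on the right-hand side only by factors $1 + O(r^{\alpha_1})$, so it suffices to prove the formula when $\Gamma$ is contained in an $(m-1)$-plane. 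For such a boundary one has available the reflection trick: the reflected current $\tilde T = T + \sigma_\sharp T$ across $\Gamma_0$ (where $\sigma$ is the orthogonal reflection) is a current without boundary in $B_{r_1}$, and $\|\tilde T\|(B_r) = 2\|T\|(B_r)$ since $\Gamma_0$ is $\|T\|$-null.

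Next I would run the standard first-variation computation on the reflected current. Testing the (approximate) stationarity of $\tilde T$ against the radial vector field $X(z) = g(|z|)\, z$ for suitable cutoffs $g$, and using the decomposition $|z|^2 = |z^\top|^2 + |z^\perp|^2$ relative to the approximate tangent plane, produces the usual differential inequality
\[
\frac{d}{dr}\left(\frac{\|\tilde T\|(B_r)}{r^m}\right) \ge \frac{d}{dr}\int_{B_r} \frac{|z^\perp|^2}{|z|^{m+2}}\, d\|\tilde T\|(z) + (\text{error}),
\]
where the error term comes from the fact that $\tilde T$ is not exactly stationary but only almost minimizing: the defect in the first variation is controlled, via \eqref{eq:almost minimizing} applied to a cone-type competitor, by a term of size $C r^{\alpha_0 - 1}\|\tilde T\|(B_r)$. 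Concretely one compares $T\res B_r$ with the cone $0 \cone (\partial(T\res B_r))$ over its slice boundary; almost-minimality gives $\|T\|(B_r) \le (1 + C_0 r^{\alpha_0})\|0\cone \partial(T\res B_r)\|(B_r)$, and the mass of the cone is $\frac{r}{m}$ times the mass of the slice $\langle T, d, r\rangle$, which is essentially $\frac{d}{dr}\|T\|(B_r)$ minus the excess term $\int \frac{|z^\perp|^2}{|z|^{m+2}}$. Rearranging yields a differential inequality of the form $\frac{d}{dr}\log\left(\frac{\|T\|(B_r)}{r^m}\right) \ge -C_3 \alpha_3 r^{\alpha_3 - 1} + (\text{nonnegative excess density})$.

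Then I would integrate this differential inequality between $s$ and $r$. Multiplying through by the integrating factor $e^{C_3 r^{\alpha_3}}$ converts the left-hand side into a genuine monotone quantity $r \mapsto e^{C_3 r^{\alpha_3}}\frac{\|T\|(B_r)}{r^m}$, and integrating the excess term is exactly where Lemma \ref{lem.integration of measures} enters: the identity $\int_s^r f(t)\frac{d}{dt}\|T\|(B_t)\,dt = \int_{B_r \setminus B_s} f(|z|)\, d\|T\|(z)$ (applied with $f(t) = e^{C_3 t^{\alpha_3}} t^{-m-2}$ against the measure $\frac{|z^\perp|^2}{2} d\|T\|$, or rather its appropriate disintegration) turns the accumulated excess into precisely the right-hand side of \eqref{eq:almost monotonicity formula}. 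Dividing by $2$ accounts for the reflection, and undoing the diffeomorphism $\phi$ only perturbs constants.

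The main obstacle I anticipate is making the first-variation/almost-minimality comparison rigorous at the level of slices: one must control the slice $\langle T, d, r\rangle$ for a.e. $r$, check that the competitor $T\res B_r - 0\cone\langle T,d,r\rangle$ is admissible in \eqref{eq:almost minimizing} (it is the boundary of an integral $(m+1)$-current supported in $B_r$), and carefully track how the $O(r^{\alpha_1})$ distortion from straightening the $C^{1,\alpha_1}$ boundary interacts with the $O(r^{\alpha_0})$ almost-minimizing error so that both are absorbed into a single exponent $\alpha_3 = \min(\alpha_0, \alpha_1)$ and constant $C_3$. The reflection step also needs the boundary $\Gamma_0$ to carry no $\|T\|$-mass, which follows from the constancy theorem / the structure of $\partial T = \Gamma$ once the boundary is flat; this should be addressed explicitly.
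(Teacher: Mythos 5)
Your core mechanism (compare $T\res B_t$ with a cone over its slice, use almost-minimality to get $\norm{T}(B_t)\le(1+Ct^{\alpha})\frac{t}{m}\bM(\langle T,d,t\rangle)$, then integrate the resulting differential inequality with Lemma \ref{lem.integration of measures}) is the same as the paper's, but two of your intermediate steps are genuinely flawed. First, the reflection trick does not do what you claim: if $\sigma$ is the reflection fixing $\Gamma_0$ pointwise, then $\sigma_\sharp\a{\Gamma_0}=\a{\Gamma_0}$, so $\partial(T+\sigma_\sharp T)=2\a{\Gamma_0}\neq 0$ (you would need $T-\sigma_\sharp T$), and even with the sign fixed the reflected current is not known to be almost minimizing or almost stationary --- a competitor for $\tilde T$ need not split into a symmetric pair, and Definition \ref{def.almost minimzing} provides no first-variation estimate at all, so ``testing the approximate stationarity of $\tilde T$ against the radial field'' is unjustified. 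The paper never takes a first-variation route precisely for this reason: everything is done at the level of mass comparison with an explicit competitor, and the only input is \eqref{eq:almost minimizing} plus slicing ($\int_a^b\bM(\langle T,d,t\rangle)\,dt=\int_{B_b\setminus B_a}\frac{|x^T|}{|x|}\,d\norm{T}$ and $1-\frac{|x^T|}{|x|}\ge\frac{|x^\perp|^2}{2|x|^2}$).

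Second, your reduction ``it suffices to prove the formula when $\Gamma$ is flat'' hides the real boundary difficulty. If $\Gamma$ is curved, $T\res B_r-0\cone\langle T,d,r\rangle$ is \emph{not} a cycle (its boundary is $\Gamma\res B_r-0\cone\langle\Gamma,d,r\rangle$), so the cone over the slice alone is not admissible; coning over all of $\partial(T\res B_r)$ adds the mass of $0\cone(\Gamma\res B_r)$, and transforming the excess integrand $|z^\perp|^2/|z|^{m+2}$ back through $\phi$ produces additive errors of size $r^{2\alpha_1}$ which are not present in \eqref{eq:almost monotonicity formula} and cannot be absorbed into the exponential factor without an a priori density lower bound you do not have at this stage. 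The paper's proof avoids both issues by using $\phi$ only to build the competitor: it cones the \emph{straightened} slice, uses the identity $0\cone\langle\Gamma_0,d,t\rangle=\Gamma_0\res B_t$ (so the cone is admissible with no extra boundary mass), pulls back by $\phi^{-1}$ at the price of factors $(1+C_1t^{\alpha_1})^m$, and keeps all monotonicity quantities in the original coordinates. To repair your argument, drop the reflection and the stationarity language, keep the comparison purely at the level of masses, and either adopt the paper's pull-back competitor or carefully quantify the extra cone mass over the curved boundary together with a mass lower bound before absorbing it.
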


\begin{proof}
After translation we can assume that $x=0$. Let $\phi$ denote the map constructed in Lemma \ref{lem.straight boundary}. In particular, by the construction of $\phi$ we have 
\[ d\circ \phi(x) = d(x)  \text{ for all } \abs{x}<r_1, \]
where $d(x)=\abs{x}$, as explained. By classical slicing theory we have that, for almost every $t$, the slice $\langle T, d, t \rangle$ is integral, \cite[sec. 28]{Simon} and satisfies 
\[ \langle \phi_\sharp T , d, t \rangle = \phi_\sharp  \langle T , d\circ \phi, t \rangle = \phi_\sharp  \langle T , d, t \rangle\,.\]
Hence we deduce that for a.e. $t<r_1$
\begin{align*} \partial( 0\cone\phi_\sharp\langle T , d, t \rangle) &= \phi_\sharp  \langle T , d, t \rangle + 0\cone \langle \phi_\sharp\partial T, d ,t \rangle=  \phi_\sharp  \langle T , d, t \rangle + 0\cone \langle \Gamma_0, d ,t \rangle\\
&=\phi_\sharp  \langle T , d, t \rangle + \Gamma_0 \res B_t. \end{align*}
This implies that $H:=\phi^{-1}_\sharp ( 0\cone\phi_\sharp\langle T , d, t \rangle)$ is an admissible competitor for $T$ in $B_t$ since 
\[ \partial \phi^{-1}_\sharp ( 0\cone\phi_\sharp\langle T , d, t \rangle) = \langle T,d,t\rangle + \Gamma\res B_t = \partial (T\res B_t). \]
By the \emph{almost minimizing} property of $T$ we deduce that for a.e. $t< \min\{r_0,r_1\}$ that 
\begin{align} \nonumber \norm{T}(B_t(x)) &\le (1+C_0t^{\alpha_0}) (1+ C_1 t^{\alpha_1})^m \norm{  0\cone\phi_\sharp\langle T , d, t \rangle}(B_t)\\ \nonumber &= (1+C_0t^{\alpha_0}) (1+ C_1 t^{\alpha_1})^m \frac{t}{m} \mathbf{M}(\phi_\sharp\langle T , d, t \rangle) \\\nonumber &\le (1+C_0t^{\alpha_0}) (1+ C_1 t^{\alpha_1})^{2m-1} \frac{t}{m} \mathbf{M}(\langle T , d, t \rangle)\\ \label{eq:distributional derivative}& \le (1 + \frac{\alpha_3 C_3}{2m} t^{\alpha_3}) \frac{t}{m} \mathbf{M}(\langle T , d, t \rangle) \,.\end{align}
We have $\abs{\nabla_{\vec{T}}d}(x)=\frac{\abs{x^T}}{\abs{x}}$, where $\nabla_{\vec{T}}d$ denotes the gradient of $d$ along the approximate tangent plane $\vec{T}$ and $^T$ denotes the projection onto the approximate tangent plane. Thus by slicing theory we have for a.e. $0\le a < b$ that
\[ \int_{a}^{b} \mathbf{M}(\langle T,d,t\rangle) \, dt = \int_{B_b\setminus B_a} \frac{\abs{x^T}}{\abs{x}} \, d\norm{T}\,. \]
We will denote the non-negative Radon measure on RHS by $\mu$ i.e. $\int \varphi \,d\mu = \varphi \, \frac{\abs{x^T}}{\abs{x}} \, d\norm{T}$. In particular by the above identity we have for a.e. $t$ that 
\[ \mu'(B_t) = \mathbf{M}(\langle T,d,t\rangle), \]
where $\mu'(B_t)$ denotes the distributional derivative of $t\mapsto \mu(B_t)$. 
Hence we conclude that for a.e. $t \le r_3<\min\{r_0, r_1\}$
\begin{align*}
&\frac{d}{dt}\left( t^{-m} \norm{T}(B_t) \right) = t^{-m}\left( -m \norm{T}(B_t) + \norm{T}'(B_t) \right)	\\
&\ge -\alpha_3 C_3 t^{\alpha_3-1} \frac{\norm{T}(B_t)}{t^m}+ t^{m} \left( - \frac{m}{(1 + \frac{\alpha_3 C_3}{2m} t^{\alpha_3})} \norm{T}(B_t) + \norm{T}'(B_t)\right)\\
&\ge -\alpha_3 C_3 t^{\alpha_3-1} \frac{\norm{T}(B_t)}{t^m} + t^{-m}\left( \norm{T}'(B_t) - \mu'(B_t) \right)\,.
\end{align*}
Since $\frac{\abs{x^T}}{\abs{x}} \le 1$ we have $\norm{T}'(B_t) \ge \mu'(B_t) $ for a.e. $t$. So we conclude that 
\[ \frac{d}{dt} e^{C_3t^{\alpha_3}} \left( t^{-m} \norm{T}(B_t) \right) \ge e^{C_3t^{\alpha_3}} t^{-m}\left( \norm{T}'(B_t) - \mu'(B_t) \right)\,.\]
Integrating the above inequality from $0<s<r<r_3$ and applying Lemma \ref{lem.integration of measures} we conclude 
\begin{align*}
	e^{C_3r^{\alpha_3}}\frac{\norm{T}(B_r(x))}{r^m} - 	e^{C_3s^{\alpha_3}}\frac{\norm{T}(B_s(x))}{s^m} &\ge \int_{B_r\setminus B_s} \frac{e^{C_3\abs{x}^{\alpha_3}}}{\abs{x}^m} \left(1-\frac{\abs{x^T}}{\abs{x}} \right) \, d\norm{T}(z)\\
	&\ge \int_{B_r\setminus B_s} e^{C_3\abs{x}^{\alpha_3}}\frac{\abs{x^\perp}^2}{2\abs{x}^{m+2}}\, d\norm{T}(z)\,.
\end{align*}
\end{proof}

\begin{remark}\label{rem:density}
A classical consequence of the almost monotonicity formula is that the density of the current $T$ exists in $x$
\begin{equation}\label{eq: density of the current}
	\Theta(T,x)= \lim_{r \to 0} \frac{ \norm{T}(B_r(x))}{\omega_m r^m}\,.
\end{equation}
	
\end{remark}

\begin{corollary}\label{cor:continuity of the slices in flat norm}
	Under the same conditions as in Proposition \ref{prop:almost monotonicity formula} the map 
	\[ r \mapsto \frac{ \langle T, d_x, r \rangle }{r^{m-1}} \]
	is continuous in the flat norm on $0< r < r_3$. More precisely, for any $r<s<r_3$, one has 
\begin{align}\label{eq:modulus of continuity}
		&\mathcal{F}\left(\frac{ \langle T, d_x, s \rangle }{s^{m-1}} - \frac{ \langle T, d_x, r \rangle }{r^{m-1}}\right)^2\le 2 \,\bM((i_x)_\sharp T\res B_s\setminus B_r)^2 + C s^{2\alpha_1} \\
		&\nonumber\le C \left(\ln(\frac{s}{r}) e^{C_3s^{\alpha_3}}\frac{\norm{T}(B_s(x))}{s^m}\right)\left(e^{C_3s^{\alpha_3}}\frac{\norm{T}(B_s(x))}{s^m} - 	e^{C_3r^{\alpha_3}}\frac{\norm{T}(B_r(x))}{r^m}\right)\,+ Cs^{2 \alpha_1},
	\end{align}
	where $i_x(y):= \frac{y-x}{\abs{y-x}}$.

\end{corollary}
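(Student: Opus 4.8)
The plan is to produce an explicit filling between the two rescaled slices, estimate its mass through the tangential Jacobian of the spherical projection $i=i_x$, and then feed the resulting integral into the almost monotonicity formula of Proposition~\ref{prop:almost monotonicity formula}. After a translation I would assume $x=0$ and abbreviate $i=i_0$, $d=d_0$.

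First I would produce the filling. The natural choice is $Q:=i_\sharp\bigl(T\res(B_s\setminus B_r)\bigr)$, an $m$-dimensional current carried by $\partial B_1$; this is legitimate since $i$ is Lipschitz on the closed annulus $\{r\le\abs{x}\le s\}$. As $\partial T=\Gamma$, the slicing identity gives, for a.e.\ $r<s$,
\[
\partial\bigl(T\res(B_s\setminus B_r)\bigr)=\langle T,d,s\rangle-\langle T,d,r\rangle+\Gamma\res(B_s\setminus B_r),
\]
hence $\partial Q=i_\sharp\langle T,d,s\rangle-i_\sharp\langle T,d,r\rangle+i_\sharp\bigl(\Gamma\res(B_s\setminus B_r)\bigr)$. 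Reading $\tfrac{\langle T,d,t\rangle}{t^{m-1}}$ as the push-forward $i_\sharp\langle T,d,t\rangle$ onto $\partial B_1$, this identity says
\[
\frac{\langle T,d,s\rangle}{s^{m-1}}-\frac{\langle T,d,r\rangle}{r^{m-1}}=\partial Q-i_\sharp\bigl(\Gamma\res(B_s\setminus B_r)\bigr),
\]
so by the definition~\eqref{eq: flat norm distance} of the flat norm $\mathcal F(\,\cdot\,)\le\bM(Q)+\bM\bigl(i_\sharp(\Gamma\res(B_s\setminus B_r))\bigr)$.

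Next I would estimate the two masses. For $\bM(Q)$ I would use $\bM(Q)\le\int_{B_s\setminus B_r}\mathbf{J}i(\vec T)(x)\,d\norm{T}$ and compute the tangential Jacobian: from $Di(x)=\abs{x}^{-1}\bigl(\mathbf{1}-\abs{x}^{-2}x\otimes x\bigr)$, restricting to the approximate tangent plane $\vec T$ and diagonalising $\mathbf{1}-\abs{x}^{-2}x^T\otimes x^T$ one gets $\mathbf{J}i(\vec T)(x)=\abs{x^\perp}\,\abs{x}^{-m-1}$, so $\bM(Q)\le\int_{B_s\setminus B_r}\abs{x^\perp}\abs{x}^{-m-1}\,d\norm{T}$. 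For the boundary term I would use that near $0$ the submanifold $\Gamma$ is the graph of some $\gamma\in C^{1,\alpha_1}$ with $\gamma(0)=0$, $D\gamma(0)=0$, so $\abs{\gamma(y)}\lesssim\abs{y}^{1+\alpha_1}$ and $\abs{D\gamma(y)}\lesssim\abs{y}^{\alpha_1}$; then $y\mapsto i(y,\gamma(y))$ is, to leading order, the radial projection $y\mapsto y/\abs{y}$ whose Jacobian vanishes, and the $\gamma$-induced correction has size $\lesssim\abs{y}^{\alpha_1}$, so the Jacobian is $\lesssim\abs{y}^{\alpha_1-m+1}$, which integrated in polar coordinates gives $\bM\bigl(i_\sharp(\Gamma\res(B_s\setminus B_r))\bigr)\lesssim\int_r^s\rho^{\alpha_1-1}\,d\rho\lesssim s^{\alpha_1}$. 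Combining with $(a+b)^2\le2a^2+2b^2$ yields the first line of~\eqref{eq:modulus of continuity}.

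Finally I would close the estimate with monotonicity. Cauchy--Schwarz gives
\[
\bM(Q)^2\le\Bigl(\int_{B_s\setminus B_r}\frac{\abs{x^\perp}^2}{\abs{x}^{m+2}}\,d\norm{T}\Bigr)\Bigl(\int_{B_s\setminus B_r}\frac{1}{\abs{x}^{m}}\,d\norm{T}\Bigr).
\]
Discarding the weight $e^{C_3\abs{x}^{\alpha_3}}\ge1$, Proposition~\ref{prop:almost monotonicity formula} bounds the first factor by $2\bigl(e^{C_3s^{\alpha_3}}\tfrac{\norm{T}(B_s)}{s^m}-e^{C_3r^{\alpha_3}}\tfrac{\norm{T}(B_r)}{r^m}\bigr)$. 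For the second factor, Lemma~\ref{lem.integration of measures} with $\mu=\norm{T}$ and $f(t)=t^{-m}$ turns $\int_{B_s\setminus B_r}\abs{x}^{-m}\,d\norm{T}$ into $\int_r^s t^{-m}\tfrac{d}{dt}\norm{T}(B_t)\,dt$; writing $\norm{T}(B_t)=e^{-C_3t^{\alpha_3}}t^m\,M(t)$ with $M(t):=e^{C_3t^{\alpha_3}}t^{-m}\norm{T}(B_t)$ non-decreasing by Proposition~\ref{prop:almost monotonicity formula}, an integration by parts together with $M(t)\le M(s)$ and $e^{-C_3t^{\alpha_3}}\le1$ bounds it by $C\ln(\tfrac sr)\,e^{C_3s^{\alpha_3}}\tfrac{\norm{T}(B_s)}{s^m}$ plus an additive multiple of $M(s)-M(r)$. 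Multiplying the two factors then gives the second line of~\eqref{eq:modulus of continuity}, and the asserted continuity of $r\mapsto\tfrac{\langle T,d_x,r\rangle}{r^{m-1}}$ on $(0,r_3)$ follows at once, since the right-hand side of~\eqref{eq:modulus of continuity} tends to $0$ as $r\to s$. I expect the main obstacle to be exactly this last bookkeeping step: after multiplying the two Cauchy--Schwarz factors there is a leftover cross term of order $\bigl(M(s)-M(r)\bigr)^2$, and making the whole expression fit the precise product on the right of~\eqref{eq:modulus of continuity} requires controlling $M(s)-M(r)$ by $C\ln(s/r)\,e^{C_3s^{\alpha_3}}\tfrac{\norm{T}(B_s)}{s^m}$ — the $s^{\alpha_1}$ contribution, by contrast, is exactly the $Cs^{2\alpha_1}$ on the right after squaring, and identifying the $\Gamma$-term in $\partial Q$ and the Jacobian of $i$ is routine but easy to slip on.
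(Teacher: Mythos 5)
Up to the last bookkeeping step your argument is the paper's proof: the same filling $Q:=i_\sharp\bigl(T\res(B_s\setminus B_r)\bigr)$ with $-i_\sharp\bigl(\Gamma\res(B_s\setminus B_r)\bigr)$ as the mass part in the flat-norm decomposition, the same tangential Jacobian bound $\abs{x^\perp}\abs{x}^{-m-1}$ for $\bM(Q)$, the same estimate of the boundary contribution via $\abs{x^\perp}\le C\abs{x}^{1+\alpha_1}$ on $\Gamma$ (your exponent, i.e.\ the $(m-1)$-Jacobian of $i$ along $\Gamma$ of size $\lesssim\abs{x}^{\alpha_1+1-m}$, is indeed the one that produces $Cs^{\alpha_1}$), and the same Cauchy--Schwarz step in which Proposition~\ref{prop:almost monotonicity formula} controls the factor $\int_{B_s\setminus B_r}\abs{x^\perp}^2\abs{x}^{-m-2}\,d\norm{T}$. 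The first line of \eqref{eq:modulus of continuity} is therefore fully established by your proposal.

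The genuine gap is exactly where you depart from the paper, in the factor $\int_{B_s\setminus B_r}\abs{x}^{-m}\,d\norm{T}$. Your route via Lemma~\ref{lem.integration of measures} and an integration by parts leaves the additive term $M(s)-M(r)$, where $M(t):=e^{C_3t^{\alpha_3}}t^{-m}\norm{T}(B_t)$, and the control you would need to absorb the resulting $(M(s)-M(r))^2$ cross term, namely $M(s)-M(r)\le C\ln(\tfrac sr)\,M(s)$, is false in general: $M$ is merely monotone, and for $r$ close to $s$ one has $\ln(\tfrac sr)\approx\frac{s-r}{s}$ while $M(s)-M(r)$ may vanish much more slowly (nothing in the almost monotonicity formula makes $t\mapsto M(t)$ Lipschitz in $\ln t$). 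So as written you obtain only a weaker inequality with an extra additive $(M(s)-M(r))^2$ (which would, incidentally, still suffice for the application in Section~4, where $s/2\le r\le s$, but is not the stated estimate). The paper avoids differentiating $\norm{T}(B_t)$ altogether: it covers $B_s\setminus B_r$ by the dyadic annuli $B_{2^{-l}s}\setminus B_{2^{-l-1}s}$ with at most $C(1+\ln(\tfrac sr))$ values of $l$, bounds $\abs{x}^{-m}\le 2^m(2^{-l}s)^{-m}$ on each, and uses the almost-monotone mass ratio to get $\int_{B_s\setminus B_r}\abs{x}^{-m}\,d\norm{T}\le C\ln(\tfrac sr)\,e^{C_3s^{\alpha_3}}\frac{\norm{T}(B_s)}{s^m}$ directly; inserting this into your Cauchy--Schwarz gives the second line of \eqref{eq:modulus of continuity} verbatim. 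A final small point: continuity does not follow ``at once'' from the right-hand side of \eqref{eq:modulus of continuity} tending to zero as $r\to s$ --- it does not, because of the additive $Cs^{2\alpha_1}$ --- but rather from the intermediate bound $\mathcal F\le\bM\bigl(i_\sharp T\res(B_s\setminus B_r)\bigr)+\bM\bigl(i_\sharp\Gamma\res(B_s\setminus B_r)\bigr)$, both terms being integrals over the shrinking annulus.
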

\begin{proof}
As before we may assume without loss of generality that $x=0$. 
Since $\spt(\langle T, d, r \rangle) \subset \partial B_r$, we have 
\[ i_\sharp \langle T, d, r \rangle = \frac{ \langle T, d, r \rangle }{r^{m-1}} \,.\]
We note that, for $r<s<r_3$, 
\[ \langle T, d, s\rangle - \langle T, d, r\rangle = \partial (T\res (B_s\setminus B_r)) - \Gamma \res(B_s \setminus B_r)\,. \]
So we can estimate
\begin{equation}\label{eq:distance flat norm slices} \mathcal{F}\left( i_\sharp \langle T, d, s\rangle - i_\sharp\langle T, d, r\rangle\right) \le \mathbf{M}(i_\sharp T\res(B_s\setminus B_r)) +\mathbf{M}(i_\sharp \Gamma\res(B_s\setminus B_r)).\end{equation}

and
\begin{align*}  &\mathbf{M}(i_\sharp T\res(B_s\setminus B_r)) = \int_{B_s\setminus B_r} \frac{\abs{x^\perp}}{\abs{x}^{m+1}} \, d\norm{T}\\& \le \left( \int_{B_s\setminus B_r} \frac{1}{\abs{x}^m} \,d\norm{T}\right)^{\frac12}\left( \int_{B_s\setminus B_r} \frac{\abs{x^\perp}^2}{\abs{x}^{m+2}} \,d\norm{T}\right)^{\frac12}\,.\end{align*}
The second term is directly bounded by \eqref{eq:almost monotonicity formula}. The first part can be bounded by splitting  $B_s\setminus B_r\subset \bigcup_{l=1}^k B_{2^{-l}s}\setminus B_{2^{-l}s}$ for $l\le C \ln(\frac{s}{r})$ and using the \eqref{eq:almost monotonicity formula} by 
\[	\int_{B_s\setminus B_r} \frac{1}{\abs{x}^m} \,d\norm{T} \le C \ln(\frac{s}{r}) \frac{\norm{T}(B_s)}{s^m}. \]
The second term in \eqref{eq:distance flat norm slices} can be bounded using the regularity of $\Gamma$. By direct computations one obtains that $\abs{x^\perp}\le C \abs{x}^{1+\alpha_1}$ hence 
\begin{align*} \mathbf{M}(i_\sharp \Gamma\res(B_s\setminus B_r))&= \int_{\Gamma\cap(B_s\setminus B_r)} \frac{\abs{x^\perp}}{\abs{x}^{m+1}} \le C \int_{\Gamma\cap(B_s\setminus B_r)} \abs{x}^{\alpha_1-m}\\
&\le C s^{\alpha_1}\,.
\end{align*}
\end{proof}

\section{White's Epiperimetric inequality and its generalizations for boundary points}\label{sec.epiperimetric}
In this section we prove a generalization of White's epiperimetric inequality at boundary points. The argument is very close to White's original one \cite{White} and the argument presented in \cite[Lemma 3.3]{DeLellis}.
 Let us denote by $\Gamma_0$ the line $\R \times \{0\} \in \R^{n+2}$. We recall the characterization of $2$-dimensional area minimizing cones with boundary \cite[Lemma 3.18]{DeLellisDePhilippis}

\begin{lemma}[Characterization of 2 dimensional area minimizing cones with boundary]\label{lem. characterisation of boundary cones}
Let $T_0$ be an integral 2-dimensional locally area-minimizing current in $\R^{2+k}$ with $(\iota_{0,r})_\sharp T_0 = T_0$ for every $r>0$ and $\partial T_0 =  \a{\Gamma_0}$, where $\Gamma_0 = \{(x,y)\in \R^2\times \R^{k}: x_1=|y|=0\}$, Then 
\[
T_0 = \a{\pi^+} + \sum_{i=1}^N \theta_i \a{\pi_i}
\] 
where 
\begin{itemize}
\item[(a)] $\pi^+$ is a closed oriented half-plane;
\item[(b)] the $\pi_i$'s are all oriented $2$-dimensional planes which can only meet at the origin;
\item[(c)] the coefficients $\theta_i$'s are all natural numbers;
\item[(d)] if $\pi^+\cap \pi_i\neq \{0\}$, then $\pi^+\subset \pi_i$ and they have the same orientation.
\end{itemize}
\end{lemma}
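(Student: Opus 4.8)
The plan is to reduce the statement to the one-dimensional geometry of the link of $T_0$, and then to run a dimension-reduction argument at the points of that link. Since $(\iota_{0,r})_\sharp T_0 = T_0$, the slice $\sigma := \langle T_0, d, 1\rangle$ is an integral $1$-current supported in $S:=\partial B_1$, one has $T_0\res B_1 = 0\cone\sigma$, and $\partial T_0 = \a{\Gamma_0}$ forces $\partial\sigma = \a{p}-\a{-p}$ with $\{p,-p\} = \Gamma_0\cap S$; in particular $\partial\sigma$ has multiplicity one. The whole statement will follow once $\spt\sigma$ is understood.

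First I would classify one-dimensional area minimizing cones in $\R^N$. Writing such a cone as $\sum_j m_j\a{\rho_j}$ with the $\rho_j$ distinct rays from $0$ and $m_j\in\Z\setminus\{0\}$, and comparing with ``optimal transport'' competitors in $B_1$ --- finitely many segments joining the boundary points on $\partial B_1$, of total mass the cost of the corresponding matching --- one sees that, to avoid being strictly beaten, the cone must be: $\theta\a{\ell}$ for a line $\ell\ni 0$ and $\theta\in\N$, when it has no boundary; and $\theta\a{\ell}+\a{\ell^+}$ for a line $\ell\ni 0$, a closed ray $\ell^+\subset\ell$ from $0$, and $\theta\in\N\cup\{0\}$, when its boundary is a single point of multiplicity one at the vertex --- i.e.\ a line whose multiplicity jumps by one across $0$ along one of its two rays. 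Rays in three or more directions, or a line-plus-ray that is not aligned, always admit a strictly shorter competitor.

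Now, because $T_0$ is a cone with vertex $0$, at each $q\in\spt\sigma$ every tangent cone to $T_0$ at $q$ splits off the line $\R q$, i.e.\ equals $\a{\R q}\times C$ with $C$ a one-dimensional area minimizing cone in $q^\perp\cong\R^{1+k}$, and $\partial C = 0$ if $q\notin\Gamma_0$ while $\partial C = \a 0$ (multiplicity one) if $q\in\{p,-p\}$. By the previous step together with the regularity theory for area minimizing currents, $T_0$ is therefore, near a point $q\notin\Gamma_0$, a multiplicity-$\theta$ plane through $0$, and near $q = \pm p$ the sum of a multiplicity-$\theta$ plane through $0$ and a multiplicity-one half-plane contained in that plane, compatibly oriented, with boundary $\Gamma_0$. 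Consequently $\spt\sigma$ is a union --- finite since $\Theta(T_0,0)<\infty$ --- of complete great circles carrying constant positive integer multiplicities (constancy theorem) and of one geodesic arc of length $\pi$ from $-p$ to $p$ of multiplicity one; passing to cones gives $T_0 = \a{\pi^+}+\sum_i\theta_i\a{\pi_i}$ with $\pi^+$ the half-plane over the arc (so $\partial\pi^+=\Gamma_0$) and $\pi_i$ the planes over the great circles, which is (a), (c). Two distinct $\pi_i,\pi_j$ can only meet along a line $\ell\ni 0$; a point of $\ell\cap S$ lies in both planes, so the above local description (a single multiplicity-$\theta$ plane) fails there unless that point is $\pm p$, i.e.\ $\ell=\Gamma_0$, giving (b). Finally $\Gamma_0$ is the only line through $0$ contained in $\pi^+$, so if $\pi^+\cap\pi_i\neq\{0\}$ then a point of it other than $0$ lies on $\Gamma_0$, and the local description there --- where the extra multiplicity-one sheet is exactly half of $\pi_i$ with the induced orientation --- says precisely that $\pi^+\subset\pi_i$ with the same orientation, which is (d).

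The hard part is the dimension-reduction step, and specifically the regularity input: promoting the tangent-cone information into an honest local description of $T_0$ near $\Gamma_0$ requires knowing that a multiplicity-one boundary point with half-plane tangent cone is a $C^{1,\alpha}$-regular boundary point, together with interior regularity for the higher-multiplicity sheet passing through $\Gamma_0$; the orientation-compatibility in (d) depends on this fine structure. The one-dimensional classification, though elementary, also needs some care with orientations to single out the aligned configurations that survive.
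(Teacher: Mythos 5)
Note first that the paper does not prove this lemma at all: it is quoted verbatim from \cite{DeLellisDePhilippis} (Lemma 3.18 there), so there is no internal proof to compare your sketch against. Judged on its own terms, your overall strategy is the natural one: pass to the link $\sigma=\langle T_0,d,1\rangle$ with $\partial\sigma=\pm(\a{p}-\a{-p})$, classify one-dimensional area-minimizing cones with and without a multiplicity-one boundary point at the vertex, split tangent cones at points of $\spt\sigma$ into $\a{\R q}\times C$, and reassemble. Your one-dimensional classification is correct (modulo the orientation bookkeeping you acknowledge), and the way you extract (b), (c), (d) from the local description, once that description is in hand, is fine.

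The genuine gap is precisely the step you defer to ``the regularity theory for area minimizing currents''. At an interior point $q$ of the link where the tangent cone is a plane of multiplicity $\theta\ge 2$ there is no Allard-type $\epsilon$-regularity theorem: for general two-dimensional area-minimizing currents such points can be branch points, so ``tangent cone a $\theta$-fold plane'' does not yield that $T_0$ is a $\theta$-fold plane nearby. At $q=\pm p$ the tangent cone has density $\theta+\tfrac12\ge\tfrac32$ whenever $\theta\ge 1$, which is outside the scope of Allard's boundary regularity theorem (that requires density $\tfrac12$); worse, in \cite{DeLellisDePhilippis} the present lemma is itself an ingredient in proving boundary regularity at such higher-density boundary points, so invoking that regularity here would be circular. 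What rescues the scheme for a \emph{cone} is that the object to be understood is the one-dimensional link: it is a stationary integral $1$-varifold in $\partial B_1$ (with boundary at $\pm p$), and by the Allard--Almgren structure theory for one-dimensional stationary integral varifolds its support is a finite union of great-circle arcs with constant integer multiplicities meeting at finitely many junctions; area-minimality of $0\cone\sigma$ then rules out junctions by a corner-cutting comparison, and your tangent-cone splitting together with the $1$-d boundary classification handles the points $\pm p$. With that substitute for the regularity input, the remainder of your sketch (finiteness via $\Theta(T_0,0)<\infty$, the constancy theorem on each circle, the multiplicity-one arc from $-p$ to $p$, and the derivation of (a)--(d)) goes through; as written, however, the central step is unsupported.
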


\begin{lemma}\label{lem.boundary epiperimetric I}
Let $S\in \mathbf{I}_2(\R^{n+2})$ be an area-minimizing cone with $\partial S=\Gamma_0$. There exists a constant $\epsilon_1=\epsilon_1(S)>0$ with the property: if $R: = \langle S, d, 1\rangle$, where $d(x) = \abs{x}$ and $Z \in \mathbf{I}_2(\partial B_1)$ with $\partial Z = \partial R = \a{e_1}- \a{-e_1}$ and
\begin{itemize}
\item[(a1)] $\mathcal{F}(Z-R) \le \epsilon_1$;
\item[(a2)] $\mathbf{M}(Z)- \mathbf{M}(R) \le \epsilon_1$;
\item[(a3)] $\dist(\spt(Z), \spt(R))\le \epsilon_1$	
\end{itemize}
then there exists $H \in \mathbf{I}_2(B_1)$ sucht that $\partial H = \partial (S\res B_1)$ and 
\begin{equation}\label{eq:epiperimetric inequality} \norm{H}(B_1) - \norm{S}(B_1) \le (1- \epsilon_1) \left( \norm{0 \cone Z}(B_1) - \norm{S}(B_1) \right). \end{equation}
\end{lemma}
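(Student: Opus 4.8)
The plan is to follow White's original construction of a comparison surface, adapting it to the boundary setting where the relevant cones are classified by Lemma~\ref{lem. characterisation of boundary cones}. The idea is to interpolate between the competitor $Z$ on $\partial B_1$ and the cross-section $R$ of $S$, replacing the purely conical extension $0\cone Z$ by something cheaper near the origin. First I would use the structural description $S = \a{\pi^+} + \sum_i \theta_i\a{\pi_i}$ to reduce to a model situation: the half-plane $\pi^+$ contributes a geodesic half-circle to $R$ (with endpoints $\pm e_1$), and each plane $\pi_i$ contributes one or two great circles. By the closeness hypotheses (a1)--(a3), $Z$ splits, after a small perturbation, into pieces each close to the corresponding piece of $R$; in particular the connected components of $\spt(Z)$ are graphs over the corresponding geodesic circles in $\partial B_1$, with small $C^0$-norm (this is where (a3) and the lower density bound coming from Remark~\ref{rem:density} are used).

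The key step is the construction of $H$. On each component, I would write the nearby geodesic circle as the equator of a $2$-sphere (or the relevant totally geodesic $S^2 \subset \partial B_1$ in higher codimension is replaced by working in the $3$-dimensional span of the plane and a perturbation direction), parametrize the component of $Z$ as a graph $u$ over that equator, and then take $H$ on that component to be the portion of the cone over the graph that has been "pushed'' toward the minimal surface spanning the equator — concretely, over the annulus $B_1\setminus B_{1/2}$ one keeps the conical extension, and over $B_{1/2}$ one replaces it by a rescaled copy of the minimal (flat, or half-flat) disc spanning the shrunk curve, matching continuously at $\partial B_{1/2}$. The mass of this $H$ is computed by the coarea formula slice-by-slice in $d$: on each radius the slice of $H$ is a curve whose length is controlled by $\mathbf{M}(\langle S,d,t\rangle)$ plus a quadratic error in the graph function $u$, and the logarithmic weight $\int_{1/2}^1 \frac{dt}{t}$ produced by the conical scaling is strictly less than the full $\int_0^1$, which is exactly what yields the factor $(1-\epsilon_1)$. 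This is the standard White computation; the boundary modification is that one of the components is a half-object and one must check that the spanning minimal surface for the perturbed half-circle is the perturbed half-disc, with the right mass expansion — here the $C^{1,\alpha}$ hypothesis on $\Gamma_0$ (it is literally a line) makes the boundary curve analytic and the expansion clean.

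The main obstacle I anticipate is the case distinction (d) in Lemma~\ref{lem. characterisation of boundary cones}: when $\pi^+\subset\pi_i$ the half-plane and a plane share a boundary and overlap, so the corresponding pieces of $R$ are a geodesic circle traversed with multiplicity $\theta_i$ on one half and $\theta_i+1$ on the other, and $Z$ near such $R$ need not split into disjoint graphs — it may be a single multiplicity-varying curve. One handles this by grouping: treat $\a{\pi^+}+\theta_i\a{\pi_i}$ as a unit, use Lemma~\ref{lem:Lipschitz approximation} (the symmetry-preserving Lipschitz modification from Section~\ref{sec.approximation}) to replace the relevant component of $Z$ by one with controlled Lipschitz constant taking the same boundary values $\a{e_1}-\a{-e_1}$, and then run the same cone-vs-disc interpolation with these multiplicities; the algebra of the mass comparison is unchanged because it is linear in the number of sheets. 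A secondary technical point is ensuring $\partial H = \partial(S\res B_1) = R + \Gamma_0\res B_1$ exactly: the continuity matching at $\partial B_{1/2}$ must be done at the level of currents (so that the interior boundaries cancel), which forces one to be careful that the "pushed'' curve at radius $t$ depends continuously on $t$ and limits onto $\partial Z$ as $t\to 1$ — again routine once the components are graphs, but it is the place where hypothesis (a1), controlling the flat distance, is actually consumed.
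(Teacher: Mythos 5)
Your reductions at the start (splitting $Z$ into components, grouping $\a{\pi^+}+\theta_i\a{\pi_i}$ when the half-plane lies inside a plane, and invoking the symmetry-preserving Lipschitz approximation of Section \ref{sec.approximation} via reflection across $\Gamma_0$) are indeed the right ones and match the paper. The gap is at the heart of the matter: the construction of the inner competitor and the reason it beats the cone by a definite fraction of $\norm{0\cone Z}(B_1)-\norm{S}(B_1)$. Your filling of the shrunk curve by ``the minimal (flat, or half-flat) disc'' with mass equal to the ideal value plus a small fraction of $\bM(Z)-\bM(R)$ is essentially the statement to be proved (at scale $1/2$), so as written the argument is circular; and the announced source of the gain --- the logarithmic weight $\int_{1/2}^1\frac{dt}{t}$ versus $\int_0^1$ --- is not how the conical excess is distributed: for a graph $u$ over the equator the excess density of $0\cone Z$ scales like $r\,dr$, there is no logarithm, and the annulus $B_1\setminus B_{1/2}$ carries (to leading order) $3/4$ of the excess. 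An inner filling with negligible excess would then still give a workable factor $3/4$, but such a filling does not exist in general, and more importantly no filling can gain a fraction of the excess measured against the \emph{fixed} plane $\pi$: if $Z$ is the unit slice of $Q\a{\pi'}+\a{\pi'^+}$ with $\pi'$ slightly tilted about $\Gamma_0$, then $0\cone Z$ is itself area minimizing, the right-hand side of \eqref{eq:epiperimetric inequality} vanishes, while the excess of $Z$ over $\pi$ (which is what a ``quadratic error in the graph function $u$'' over the equator measures) is positive. This is exactly why the paper's proof, following White, expands the Lipschitz-approximated curve in a Fourier series $y_3(t)=\sum_k a_k\sin\bigl(\tfrac{k}{2\theta_0+1}t\bigr)$, compares the $1$-homogeneous extension $f$ with the extension $h$ built from the homogeneities $r^{k/(2\theta_0+1)}$, and uses the spectral-gap inequality $\int_\Omega\abs{Df}^2-\int_\Omega\abs{Dh}^2\ge c_0\int_\Omega\abs{Df-Dl_0}^2$, in which the tilt mode $l_0$ (the $k=2\theta_0+1$ coefficient) is subtracted: the gain is only relative to the optimally tilted plane $\tau_l$, and the final bookkeeping $\rho^2E(Z_1,\tau_2)\le\norm{(0\cone Z_1)_\infty}(B_1)-\norm{(0\cone R)_\infty}(B_1)$ is what converts this into \eqref{eq:epiperimetric inequality}. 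No ``matching at $\partial B_{1/2}$'' argument replaces this step, and it is entirely absent from your proposal.

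A second, smaller gap: hypotheses (a1)--(a3) do not make the components of $\spt(Z)$ graphs over the corresponding circles with small $C^0$ norm, let alone with small Lipschitz constant; $Z$ is only an integral $1$-current close in flat norm, mass and support, and may oscillate badly on a set of small measure. The paper's Step 1 --- decomposition into indecomposable components, the constancy theorem to pin down the winding numbers $\theta_i\ge0$ with $\sum_i\theta_i=Q$, isoperimetric filling of zero-degree components, and Lemma \ref{lem:Lipschitz approximation} applied after reflection, with the discarded piece $Z_1-Z_2$ filled by an isoperimetric current $S_2$ whose mass is quadratic in the excess --- is needed in every case, not only in your case-(d) grouping; and in that overlapping case the analysis is not ``linear in the number of sheets'', since the competitor uses fractional homogeneities and the constant $c_0$ depends on $\theta_0$. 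Finally, your boundary condition is internally inconsistent: you require $\partial H=R+\Gamma_0\res B_1$ while building $H$ to limit onto $Z$ at $\partial B_1$; the competitor must span $Z$ together with the straight boundary segment, as in the paper's final construction $H=\rho(S_1+S_2)+(0\cone Z_1)_\infty\res\bigl(B_1\setminus\bC_\rho(\tau_2)\bigr)$.
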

Before we will prove this "new" boundary adaption of the "classical" version, we want to indicate how to use it to close the argument. We can almost verbatim follow \cite{DeLellis}.
We use a compactness argument to generalize the above lemma to:
\begin{proposition}\label{prop:boundary epi-perimetric}
	Let $S\in \mathbf{I}_2(\R^{n+2})$ be an area-minimizing cone with $\partial S=\Gamma_0$. There is a constant $\epsilon_4>0$ with the property: $T$ is an \emph{almost minimizing} cone in a neighbourhood $U$ of $0$ as in definition \ref{def.almost minimzing}. There are positive constants $C_4, r_4, \alpha_4$ such that if
	\begin{itemize}
	\item[(a1)]	$\bd_{B_2}(T_{r},S)< \epsilon_4$;
	\item[(a2)] $r< \epsilon_4$;
	\item[(a3)] $\langle T_r, d, 1\rangle \in \mathbf{I}_1(\R^{n+2})$ 
	\end{itemize}
then 
 \begin{equation}\label{eq:epiperimetric inequality 2} \norm{T_r}(B_1) - \norm{S}(B_1) \le (1- \epsilon_4) \left( \norm{0 \cone \langle T_r, d,1 \rangle}(B_1) - \norm{0 \cone \langle S, d,1 \rangle}(B_1) \right)+ C_4 r^{\alpha_4}. \end{equation}
\end{proposition}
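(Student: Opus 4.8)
\textbf{Proof proposal for Proposition \ref{prop:boundary epi-perimetric}.}

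The plan is to derive \eqref{eq:epiperimetric inequality 2} from the ``static'' epiperimetric inequality of Lemma \ref{lem.boundary epiperimetric I} by a contradiction/compactness argument, exactly in the spirit of \cite[Lemma 3.3]{DeLellis}. First I would suppose the statement fails: there exist an area-minimizing cone $S$ with $\partial S = \Gamma_0$, a sequence of almost-minimizing cones $T^{(k)}$ (with fixed constants $C_0,r_0,\alpha_0$), scales $r_k \to 0$, and parameters $C_4^{(k)}\to\infty$ (or more simply one fixes $C_4$ and lets the inequality fail with the $C_4 r^{\alpha_4}$ term absorbed), such that $\bd_{B_2}((T^{(k)})_{r_k},S) \to 0$ but $\eqref{eq:epiperimetric inequality 2}$ is violated for $T_k := (T^{(k)})_{r_k}$. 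Write $R_k := \langle T_k, d, 1\rangle$ and $R := \langle S, d, 1\rangle = \a{e_1}-\a{-e_1}$.

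The core of the argument is to check that, for $k$ large, the slices $Z := R_k$ satisfy hypotheses (a1)--(a3) of Lemma \ref{lem.boundary epiperimetric I}, so that the lemma produces a competitor $H_k$ with $\partial H_k = \partial(S\res B_1)$ and $\norm{H_k}(B_1)-\norm{S}(B_1)\le (1-\epsilon_1)(\norm{0\cone R_k}(B_1)-\norm{S}(B_1))$. The flat-distance hypothesis (a1) for the slices follows from $\bd_{B_2}(T_k,S)\to0$ together with the slicing inequality $\mathcal{F}(\langle T_k,d,1\rangle - \langle S,d,1\rangle)\lesssim \bd_{B_2}(T_k,S)$, valid for a.e.\ radius and after an arbitrarily small perturbation of the radius $1$ (one may need to replace $1$ by a nearby $\rho_k\to1$ at which the slices are integral and the bound holds, and then rescale; this costs an $r_k^{\alpha_4}$-type error via the almost-monotonicity formula). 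The mass hypothesis (a2), $\mathbf{M}(R_k)-\mathbf{M}(R)\le\epsilon_1$, comes from the upper semicontinuity of mass under the convergence together with the almost-monotonicity formula of Proposition \ref{prop:almost monotonicity formula}: since $T_k$ is (almost) a cone, $\mathbf{M}(R_k) = m\,\norm{T_k}(B_1)$ up to the cone factor, and $\limsup_k \norm{T_k}(B_1)\le \norm{S}(B_1)$ by the almost-monotonicity bound and the convergence, which also forces $\Theta(T_k,0)\to\Theta(S,0)$. The support hypothesis (a3), $\dist(\spt R_k,\spt R)\to0$, is the delicate one: flat convergence alone does not control supports, so here one invokes the structure of $T_k$ as an almost-minimizing cone (monotonicity, density bounds, the characterization in Lemma \ref{lem. characterisation of boundary cones} for the limit $S$) and an interior/boundary $\varepsilon$-regularity or Allard-type argument to upgrade flat convergence to Hausdorff convergence of supports in the annulus, at least for the one-dimensional slices on $\partial B_1$.

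Once Lemma \ref{lem.boundary epiperimetric I} applies to $Z = R_k$, feed $H_k$ into the almost-minimizing inequality \eqref{eq:almost minimizing}: since $T_k\res B_1$ and $H_k$ share the same boundary, $T_k\res B_1 - H_k = \partial Q_k$ for some integral $Q_k$ supported in $\overline{B_1}$, and almost-minimality at $0$ (at scale $r_k$, which reintroduces the $(1+C_0 r_k^{\alpha_0})$ factor, hence the $C_4 r_k^{\alpha_4}$ error) gives $\norm{T_k}(B_1)\le (1+Cr_k^{\alpha_0})\norm{H_k}(B_1)$. Combining with the epiperimetric bound for $H_k$ and with $\norm{0\cone\langle S,d,1\rangle}(B_1) = \norm{S}(B_1)$ (since $S$ is a cone) yields
\[
\norm{T_k}(B_1) - \norm{S}(B_1) \le (1-\epsilon_1)\bigl(\norm{0\cone R_k}(B_1)-\norm{0\cone\langle S,d,1\rangle}(B_1)\bigr) + C r_k^{\alpha_0},
\]
which, with $\epsilon_4 := \epsilon_1$ (shrunk if needed) and $\alpha_4:=\alpha_0$, contradicts the assumed failure of \eqref{eq:epiperimetric inequality 2} for $k$ large. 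The main obstacle, as noted, is verifying hypothesis (a3) — promoting flat/weak convergence of the sliced currents to convergence of their supports — which is where the almost-minimality and the cone structure, rather than soft functional-analytic arguments, must be used; the error bookkeeping for passing between the radius $1$ and nearby ``good'' radii, and for the rescaling $\iota_{0,r_k}$, is the other point requiring care but is routine given the almost-monotonicity formula.
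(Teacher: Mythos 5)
Your overall architecture (contradiction/compactness, verification of (a1)--(a3) of Lemma \ref{lem.boundary epiperimetric I} for the slices, then feeding the resulting competitor into the almost-minimizing inequality) is the same as the paper's, but two steps as you propose them do not work. First, you apply Lemma \ref{lem.boundary epiperimetric I} directly to $Z=\langle T_k,d,1\rangle$ and later assert that $T_k\res B_1$ and $H_k$ ``share the same boundary''. They do not: $\partial T_k=\Gamma_{r_k}$ is only $C^{1,\alpha_1}$-close to $\Gamma_0$, so $\partial\langle T_k,d,1\rangle$ consists of two points merely close to $\pm e_1$, whereas the lemma requires exactly $\partial Z=\a{e_1}-\a{-e_1}$ and produces a competitor whose boundary involves $\Gamma_0$, not $\Gamma_{r_k}$; hence $T_k\res B_1-H_k$ is not a boundary and \eqref{eq:almost minimizing} cannot be invoked. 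The paper resolves this with the straightening diffeomorphism of Lemma \ref{lem.straight boundary}: one sets $Z_k:=(\phi_k)_\sharp\langle T_k,d,1\rangle$ with $\phi_k(x)=\frac{1}{r_k}\phi(r_kx)$, checks (a1) for $Z_k$ via the homotopy formula, applies the lemma to $Z_k$, and pulls the competitor back by $\phi_k^{-1}$; the bi-Lipschitz distortion $1+C_1r_k^{\alpha_1}$ is precisely what generates (part of) the error term $C_4r_k^{\alpha_4}$. Your proposal contains none of this, and it is not routine bookkeeping.

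Second, your verification of the mass hypothesis (a2) is flawed: masses of slices are not upper semicontinuous under flat convergence, and $T_k$ is not a cone (the word ``cone'' in the statement is a slip; in the proof the currents $T^k$ are merely almost minimizing), so $\mathbf{M}(\langle T_k,d,1\rangle)$ cannot be compared to $\norm{T_k}(B_1)$ ``up to the cone factor''. The paper obtains (a2) from the contradiction hypothesis itself: after upgrading lower semicontinuity to $\norm{T_k}(B_r)\to\norm{S}(B_r)$ by the cut-and-paste argument (writing $T_k-S=R_k+\partial Q_k$, choosing good radii $s$ with vanishing $\mathbf{M}(R_k\res B_s)+\mathbf{M}(\langle Q_k,d,s\rangle)$, and using almost-minimality), the assumed failure of \eqref{eq:epiperimetric inequality 2} forces $\limsup_k\bigl(\mathbf{M}(\langle T_k,d,1\rangle)-\mathbf{M}(\langle S,d,1\rangle)\bigr)\le 0$, which is (a2) for large $k$. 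Relatedly, the passage to the exact radius $1$ in (a1) is done in the paper not by perturbing and rescaling the radius but via the estimate $\mathcal{F}(\langle T_k,d,s\rangle-\langle T_k,d,r\rangle)\le\norm{T_k}(B_s\setminus B_r)+\norm{\Gamma_{r_k}}(B_s\setminus B_r)$ together with the measure convergence and $\norm{S}(\partial B_r)+\norm{\Gamma_0}(\partial B_r)=0$. Your treatment of (a3) (density lower bound from the interior and boundary monotonicity formulas, giving Hausdorff convergence of supports) is the right idea and matches the paper.
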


\begin{proof}

We argue by contradiction. So we find a sequence $T^k$ being \emph{almost minimizing} in $U$ with constants $C_0, r_0, \alpha_0>0$,  $r_k \downarrow 0$ ( i.e. $3r_k < r_0$ ) such that
\[ \bd_{B_2}(T_k, S)< 2^{-k} , \quad \langle T_k, d, 1\rangle \in \mathbf{I}_1(\R^{n+2}), \]
 where $T_k:=T_{r_k}^k$, but failing to satisfy \eqref{eq:epiperimetric inequality 2}. Note that $T_k$ is now \emph{almost minimizing} in $B_4$, for each $k$ sufficiently large.
 
By assumption we have $T_k \to S$ in the flat metric topology on $B_2$ which is equivalent to $T_k \rightharpoonup S$ in $B_2$, \cite[Theorem 6.7.1]{Simon} . Hence by the lower semicontinuity of mass 
\[ \norm{S}(B_r) \le \liminf_{k \to \infty} \norm{T_k}(B_r). \]
To each $k$ we may fix $R_k, Q_k$ with $T_k-S = R_k + \partial Q_k$ in $B_3$ and $\norm{R_k}(B_2)+\norm{Q_k}(B_2) < 2^{1-k}$. Since $\sum_{k} \norm{R_k}(B_2)+\norm{Q_k}(B_2) < \infty$, given $r<2$ we can find $r<s <2$ such that $\limsup_{k} \mathbf{M}( R_k \res B_s ) + \mathbf{M}(\langle Q_k, d, s\rangle )=0$. Hence we have 
\[ T_k\res B_s = S\res B_s + R_k\res B_s - \langle Q_k, d, s \rangle + \partial (Q_k \res B_s) \,.\]
Thus by the almost minimality of $T_k$ in $0$ we deduce that 
\[ \norm{T_k}(B_s) \le (1+ C_0 r_k^{\alpha_0}) \left(\norm{S}(B_s) + \norm{R_k}(B_s) + \mathbf{M}(\langle Q_k, d, s\rangle )\right)\,.\]
So $\limsup_{k\to \infty}\norm{T_k}(B_s)\le \norm{S}(B_s)$ and in combination with the above that $\norm{T_k} \to \norm{S}$ on $B_2$ in the sense of measures. Since $S$ is a cone, we have  $\norm{S}(\partial B_r)=0$, for all $r$, and therefore $\lim_{k\to \infty} \norm{T_k}(B_r) = \norm{S}(B_r)$ for all $r<2$. Now the failure of \eqref{eq:epiperimetric inequality 2} implies that 
\[\lim_{k\to \infty} \mathbf{M}(\langle T_k, d, 1\rangle) - \mathbf{M}(\langle S, d, 1 \rangle) \le 0 \,.\]
The almost monotonicity formula in the interior, \cite[Proposition 2.1]{DeLellis}, combined with \eqref{eq:almost monotonicity formula} implies that $\norm{T_k}(B_r(x)) \ge c r^2$ for all $x \in B_2$ and $0<r<1$. This density lower bound entails, by standard arguments, that $\spt(T_k)$ converges to $\spt(S)$ in the Hausdorff sense on $B_2$.
Let $\phi$ be the map described in Lemma \ref{lem.straight boundary}, and set \[ Z_k := (\phi_k)_\sharp \langle T_k, d, 1 \rangle \in \mathbf{I}_1(\partial B_1)\,,\]
where $\phi_k(x) = \frac{1}{r_k} \phi(r_k x)$. Recall that, thanks to the properties of $\phi$, we have \[l_k:=\norm{\phi_k(x) - x}_{C^1(B_2)}\le C_1 r_k^{\alpha_1} \to 0\,.\] 
 In particular this implies that $\spt(Z_k) \to \spt(S)$ in the Hausdorff sense on $B_2$ and 
 \[  (1- l_k)^2 \,\mathbf{M}(\langle T_k, d, 1\rangle) \le \mathbf{M}(\langle Z_k, d, 1\rangle) \le (1+ l_k)^2 \,\mathbf{M}(\langle T_k, d, 1\rangle)\,.  \]
So (a2) and (a3) in Lemma \ref{lem.boundary epiperimetric I} hold, for sufficiently large $k$. It remains to show (a1). As noticed before, we have $\limsup_{k} \mathbf{M}(\langle R_k, d, s\rangle ) + \mathbf{M}(\langle Q_k, d, s\rangle )=0$ for almost every $s<2$. Since 
\[ \langle T_k, d, s \rangle - \langle S, d, s \rangle = \langle R_k, d, s, \rangle - \partial \langle Q_k, d, s \rangle \]
we deduce that $\mathcal{F}( \langle T_k, d, s \rangle - \langle S, d, s \rangle )=0$ for a.e. $s<2$. Furthermore we have for any $r<s$
\[ \langle T_k, d, s \rangle - \langle T_k, d, r \rangle = \partial T_k\res B_r\setminus B_s - \Gamma_{r_k} \res B_r\setminus B_s\]
and therefore $\mathcal{F}(\langle T_k, d, s \rangle - \langle T_k, d, r \rangle) \le \norm{T_k}(B_s\setminus B_r) + \norm{\Gamma_{r_k}}(B_s\setminus B_r)$. Since $\norm{T_k}\to \norm{S}$, $\Gamma_{r_k} \to \Gamma_0$ and $\norm{S}(\partial B_r)+ \norm{\Gamma_0}(\partial B_r)=0$ for all $r$ we conclude that
\begin{equation}\label{eq:flat norm of slice} \lim_{k\to \infty} \mathcal{F}(\langle T_k, d, 1 \rangle - \langle S, d, 1\rangle) =0 \,.\end{equation}
We may use the homotopy formula to estimate the flat distance between $Z_k$ and $\langle T_k, d, 1 \rangle$: set $h_k(t,x):= (1-t) x + t \phi_k(x)$, so $\norm{h_k(t,x) - x}_{C^1} \to 0$ and 
\begin{align*} Z_k - \langle T_k, d, 1 \rangle &= \partial (h_k)_\sharp\left( [0,1]\times \langle T_k, d, 1 \rangle\right) - (h_k)_\sharp \left([0,1]\times \langle \Gamma_{r_k}, d, 1 \rangle\right)\\
&= \partial \tilde{Q}_k - \tilde{R}_k.\end{align*}
Since in particular $\norm{\partial_t h_k(t,x)}_{C^0(B_2)}=\norm{\phi_k(x)-x}_{C^0(B_2)} \to 0$, we have 
\begin{align*}
\bM(\tilde{Q}_k) &\le \int_0^1 \int \abs{\partial_t h_k} 	\, d\norm{\langle T_k, d, 1 \rangle}\, dt \to 0\,, \\
\bM(\tilde{R}_k) &\le \int_0^1 \int \abs{\partial_t h_k} 	\, d\norm{\langle \Gamma_{r_k}, d, 1 \rangle}\, dt \to 0\,.
\end{align*}
We combine it now with \eqref{eq:flat norm of slice} that (a1) holds for large $k$ since $\mathcal{F}(Z_k - S)\to 0$. 

For each $k$ large enough, we may then apply Lemma \ref{lem.boundary epiperimetric I} and obtain $H_k \in \mathbf{I}_2(B_1)$ satisfying \eqref{eq:epiperimetric inequality}. By construction of $\phi_k$ we deduce that $(\phi_k^{-1})_\sharp H_k$ is an admissible competitor for $T_k$ in $B_1$ since $ \partial (T_k - (\phi_k^{-1})_\sharp H_k)=0$. Using $\frac{(1+C_0 r_k^{\alpha_0})(1+C_1 r_k^{\alpha_1})-1}{(1+C_0 r_k^{\alpha_0})(1+C_1 r_k^{\alpha_1})} \le \frac{C_4}{4\norm{S}(B_1)} r_k^{\alpha_4}$ and again $\lim_{k\to\infty}\norm{T_k}(B_1)\to \norm{S}(B_1)$ we conclude 
\begin{align*}
&\norm{T_k}(B_1) - \norm{S}(B_1) \le \frac{\norm{T_k}(B_1)}{(1+C_0 r_k^{\alpha_0})(1+C_1 r_k^{\alpha_1})} - \norm{S}(B_1) + C_4 r_k^{\alpha_4}\frac{\norm{T_k}(B_1)}{4\norm{S}(B_1)} \\
&\le \frac{\norm{(\phi^{-1}_k)_\sharp H_k}(B_1)}{1+C_1 r_k^{\alpha_1}}-\norm{S}(B_1) + \frac{C_4 r_k^{\alpha_4}}{2}\le \norm{H_k}(B_1) -\norm{S}(B_1) + \frac{C_4 r_k^{\alpha_4}}{2} \\
&\le (1- \epsilon_1) \left( \norm{0 \cone Z_k}(B_1) - \norm{0 \cone \langle S, d,1 \rangle}(B_1) \right)+ \frac{C_4 r_k^{\alpha_4}}{2}\\
&\le (1- \epsilon_1) \left( \norm{0 \cone \langle T_k, d,1 \rangle}(B_1) - \norm{0 \cone \langle S, d,1 \rangle}(B_1) \right)+ C_4 r_k^{\alpha_4}\,.
\end{align*}
 This is a contradiction if $\epsilon_4<\epsilon_1$ for sufficiently large $k$. 
\end{proof}


\begin{proof}[Proof of Lemma \ref{lem.boundary epiperimetric I}]
By the previous classification lemma, \ref{lem. characterisation of boundary cones}, we have that the support of $R$ is either the disjoint union of $N$ equatorial circles of $\partial B_1$ and one half circle, or the disjoint union of $N+1$ equatorial circles of $\partial B_1$ depending which option in (d) applies: $R=R_0 + \sum_{i=1}^N R_i$, where $\spt(R_i) \subset \pi_i \cap \partial B_1$ for pairwise disjoint $2$-dimensional planes $\{\pi_i\}_{i=0}^N$. If $\epsilon_1>0$ in (a3) is sufficient small with respect to the distance of the equatorial circles/ half-circle $Z$ splits as well: in both cases we get $Z= Z_0 + \sum_{i=1}^N Z_i$ with $\partial Z_i =0 $ for $i\ge 1$ and $\partial Z_0 = \partial R=\partial R_0$. 
Hence we can consider each pair $Z_i, R_i$ separately. The cases $i\ge 1$ correspond to the interior situation, i.e. \cite[proof of Lemma 3.3]{DeLellis}. 
Thus, without loss of generality, from now on we assume that 
\[S = Q \a{\pi} + \a{\pi^+} \text{ and } R = Q \a{\Gamma_0} + \a{\Gamma_0^+},\]
where $\Gamma_0$ is the oriented equatorial circle $\pi \cap \partial B_1$ and $\Gamma_0^+$ the oriented half circle
$\Gamma_0\cap \pi^+$.

\emph{Step 1: Reduction to Lipschitz winding curves}:
Let us introduce some notation. Given a two dimensional subspace $\tau$ we will denote with $\bp_\tau$ the orthogonal projection of $\R^{n+2}$ onto $\tau$. We will denote $\bC_r(\tau)$ the cylinder $\bp_{\tau}^{-1}( B_r\cap \tau)$.\\
Let $\bP(x):=\frac{x}{\abs{x}}$ be the projection onto $\partial B_1$. If we restrict $\bP$ to $\partial \bC_1(\tau)$, it becomes an invertible diffeomorphism from $\partial \bC_1(\tau)$ onto $\partial B_1 \setminus \tau^\perp$. We will denote its inverse by $\bP_\tau^{-1}$. A direct computation gives that for $x \in \partial B_1$ and $\dist(x, \tau)< \epsilon_1$ that $\operatorname{Lip}(\bP(x)_\tau^{-1}) \le (1+ C\dist(x,\tau))$.
Furthermore in the proof we will use the general notion of excess.  Recall that the \emph{cylindrical excess} of any current $T$ with respect to the plane $\tau$ with orientation $\vec{\tau}$ is given by 
\[ E(T, \tau):= \frac12 \int_{\mathbf{C}_1(\tau)} \abs{\vec{T}(x) - \vec{\tau}}^2 \, d \norm{T}(x)= \mathbf{M}(T\res\mathbf{C}_1) - \mathbf{M}({\bp_\tau}_\sharp T \res \mathbf{C}_1(\tau)). \]
Given any curve $Z\in \partial B_1$ we define its excess to be 
\[ E(Z, \tau):= E( (0\cone Z)_\infty,\tau).\]
Generally the cylindrical excess of $T$ is then given by $ E(T):= \min_\tau E(T, \tau) $
and equivalent for curves $Z$ by $E(Z):= \min_\tau E(Z, \tau)$. 
Fur currents $T$ with $\partial T = \Gamma_0$ or curves with $\partial Z = \partial R$ we let 
\begin{equation}\label{eq:boundary excess}
E^\flat(T)= \min_{\substack{\tau\\ \Gamma_0 \subset \tau}} E(T,\tau) \text{ and } 	E^\flat(Z)= \min_{\substack{\tau\\ \Gamma_0 \subset \tau}} E(Z,\tau).
\end{equation}
Note that this is well defined since if $\partial Z = \partial R$ we have $\partial (0\cone Z)_\infty = \Gamma_0$. 
It is simple to see that under the assumptions (a1) - (a3) any minimum point $\tau$ for $(0\cone Z)_\infty$ in $E^\flat$ must be close to $\pi$. \\

We may decompose $Z$ into its indecomposable components, see \cite[4.2.25]{Federer} i.e.
\[ Z = Z_0 + \sum_{i=1}^N Z_i, \]
where each $Z_i$ is a closed Lipschitz curve for $i\ge 1$ and $\partial Z_0 = \partial R$ and $\sum_{i=0}^N \mathbf{M}(Z_i) = \mathbf{M}(Z)$. If $P(x):= \frac{\bp_\pi(x)}{\abs{\bp_\pi(x)}}$ then we have by the constancy theorem 
\[ P_\sharp Z = P_\sharp Z_0 + \sum_{i=1}^N P_\sharp Z_i = \theta_0 \a{\Gamma_0} + \a{\Gamma_0^+} + \sum_{i=1}^N \theta_i \a{\Gamma_0} \]
where $\theta_i \in \Z$ for all $i$. But due to (a1) and (a2) we have $\theta_i \ge 0$ for all $i$ and $\sum_{i =0}^N \theta_i = Q$. In particular this implies that each component $Z_i$ satisfies (a2) and (a3).
It is sufficient to prove \eqref{eq:epiperimetric inequality} for each $Z_i$ separately and sum it over $i$. \\ 
As already noted in \cite{DeLellis}, if $\theta_i=0$ for some $i\ge 1$, we can use the isoperimetric inequality to prove \eqref{eq:epiperimetric inequality} i.e. let $H$ such that $\partial H = Z_i$  and 
\[ \norm{H}(B_1) \le C (\bM(Z_i))^2 \le C \epsilon_1 \bM(Z_i) \le C \epsilon_1 \frac12 \norm{0 \cone Z_i}(B_1). \] 
It follows by a standard argument \footnote{ 
If $\epsilon_1 \downarrow 0$ in (a1) we must have $Z_i \rightharpoonup \theta_i \a{\Gamma}_0$ for $i\ge 1$ and $Z_0 \rightharpoonup \theta_0 \a{\Gamma_0} + \a{\Gamma_0^+}$ in the sense of currents. But since weak convergence implies flat norm convergence we may assume choosing $\epsilon_1>0$ sufficient small that each component itself satisfies (a1).} that each $Z_i$ satisfies as well (a1) with $R$ replaced by $\theta_i \a{\Gamma_0}$ for $i \ge 0$ and $R= \theta_0 \a{\Gamma_0} + \a{\Gamma_0^+}$ for $i=0$ and $\epsilon_1'>0$ in place of $\epsilon_1$, where $\epsilon_1' \downarrow 0$ as $\epsilon_1 \downarrow 0$.  In summary we can assume without loss of generality that in addition to (a1) - (a3) we have either 
\begin{itemize}
\item[(a4)] $R= \theta \a{\Gamma_0}$ for some integer $\theta>0$. 
\item[(a5)] $Z= X_\sharp \a{[0, \bM(Z)]}$, where $X: [0, \bM(Z)] \to \partial B_1$ Lipschtiz and $X(0)=X(\bM(Z))$. 
\item[(a6)] if $E(Z, \tau)= E(Z)$ then $E(Z, \tau)\le \epsilon$, ${\bp_\tau}_\sharp {\bP_\tau^{-1}}_\sharp Z= R$.
\end{itemize}
or we have 
\begin{itemize}
\item[(a7)]  $R= \theta_0 \a{\Gamma_0} + \a{\Gamma_0^+}$ for some integer $\theta_0\ge 0$. 
\item[(a8)] $Z= X_\sharp \a{[0, \bM(Z)]}$, where $X: [0, \bM(Z)] \to \partial B_1$ Lipschtiz and $X(0)= - e_1, X(\bM(Z))=e_1$. 
\item[(a9)] if $E^\flat(Z, \tau)= E^\flat(Z)$ then $E^\flat(Z, \tau)\le \epsilon$, ${\bp_\tau}_\sharp {\bP_\tau^{-1}}_\sharp Z= R $.
\end{itemize}
The first case, (a4) - (a6), corresponds to the interior situation and had been proven in \cite[Lemma 3.3]{DeLellis}. It remains to consider the second case, (a7) - (a9)\footnote{The interior situation is proven in a similar manner and the reader may do the obvious modifications from the boundary to the interior case.}.

We set $X_1:= \bP_\tau^{-1} \circ X: [0, \bM(Z)] \to \partial \bC_1(\tau)$. Note that if $Z_1={X_1}_\sharp\a{[0,\bM(Z)]}$ then  $(0\cone Z)_\infty = (0 \cone Z_1)_\infty$. Hence we have $E^\flat(Z) =E^\flat(\hat{Z})$ and a straight forward calculation reveals if $X_1(t) = (e^{i\theta_1(t)}, y_1(t)) \in \tau \times \tau^\perp; x_1(t)=(\theta_1(t),y_1(t))$ then 
\[  E:=E(Z_1, \tau) = \int_0^{\bM(Z)} \abs{\dot{x}_1}_h - \dot{\theta}_1. \]
We may extend $X_1$ to $[-\bM(Z), 0]$ by reflecting it along $\Gamma_0$ i.e. $X_1(-t) = (e^{-i\theta_1(-t)}, - y_1(-t))$. As a consequence we may apply Lemma \ref{lem:Lipschitz approximation} and obtain for any $\delta>0$ a function $y_2: [-(2\theta_0 +1)\pi, (2\theta_0 +1)\pi] \to \tau^\perp$ such that
\begin{itemize}
\item[(b1)] $Z_2={X_2}_\sharp\a{[0, (2\theta_0 +1)\pi]}$ where $X_2(t)= (e^{it}, y_2(t))$, $y_2: [-(2\theta_0 +1)\pi, (2\theta_0 +1)\pi] \to \tau^\perp$, $\norm{y_2}_\infty \le \epsilon$, $\operatorname{Lip}(y_2) \le C(\delta + \frac{\sqrt{E\epsilon}}{\delta})$ and \[X_2(0)=-e_1, X_2((2\theta+1)\pi) = -e_1.\] 
\item[(b2)] $\bM( Z_1 -Z_2) \le C \frac{E}{\delta^2}$;
\item[(b3)] $E^\flat(Z_2,\tau) \le E^\flat(Z_1,\tau)$;	
\end{itemize}
Next observe that if $\tau_2$ minimizes $E^\flat( (0\cone Z_2)_\infty, \tau_2)$ then we have using that by (b1) we have $\norm{(0\cone Z_2)_\infty}(B_1) \ge \frac{\pi}{4}$ and so
\[ \abs{\tau- \tau_2}^2 \le C E^\flat( (0\cone Z_2)_\infty, \tau)< C\epsilon\]
for some geometric constant $C$. In particular if we combine it with (b1), we conclude that the Lipschitz constant of $\bP_{\tau_2}^{-1} \circ \bP: \partial \bC_1(\tau) \to \partial \bC_1(\tau_2)$ is bounded by $(1+C \epsilon)$ hence $X_3:= \bP_{\tau_2}^{-1} \circ \bP \circ X_2$ is a Lipschitz curve in $\partial \bC_{1}(\tau_2)$ which still can be parametrised over $\partial B_1 \cap \tau_2$. After a rotation we may assume that $\tau_2 = \R^2 \times \{0\}$. In particular we have $X_3(t)=(e^{it}, y_3(t)), t \in [0, (2\theta_0 +1) \pi]$ with $\norm{y_3}_\infty \le C\epsilon, \operatorname{Lip}(y_3) \le C\epsilon$. Furthermore if $Z_3 ={X_3}_\sharp \a{[0,(2\theta_0+1)\pi}$ we have 
\begin{itemize}
\item[(c1)] $(\bP_{\tau_2}^{-1} \circ \bP)_\sharp Z_2= Z_3$ and $(0\cone Z_2)_\infty = (0\cone Z_3)_\infty$,
\item[(c2)] $\bM( (\bP_{\tau_2}^{-1} \circ \bP)_\sharp Z_1- Z_3) \le C \bM(Z_1-Z_2) \le C E $.
\end{itemize}
Expanding $y_3$ in Fourier series, we have $y_3(t)= \sum_{k=1}^\infty a_k \sin(\frac{k}{2\theta_0 +1}t)$. Let us define 
\begin{align*}
f(r,t)&:= \sum_{k =1}^\infty r a_k \sin(\frac{k}{2\theta_0 +1}t),\\
h(r,t)&:= \sum_{k=1}^\infty r^{\frac{k}{2\theta_0+1}} a_k \sin(\frac{k}{2\theta_0 +1}t),
\end{align*}
for $(r,t) \in \Omega:=]0,1[\times ]0, (2\theta_0+1)\pi[$. In particular the graph of $f$ corresponds to $(0\cone Z_3)$. 
We define the linear map $l_0(x,y)=a_{2\theta_0+1} y$ from $\tau_2 \to \tau_2^\perp$ and denote by $\tau_l$ the associated plane. Note that $\Gamma_0\subset \tau_2$ and since $\abs{a_{2\theta_0+1}} \le \epsilon$ we have $\abs{\tau_2 - \tau_l} \le C\epsilon$. By elementary consideration\footnote{ if $A,B: \R^2 \to \R^2\perp$ linear maps given, and $\tau_A,\tau_B$ denote the associated planes i.e. $\tau_A = \frac{e_1 + Ae_1 \wedge e_2 + Ae_2}{\abs{e_1 + Ae_1 \wedge e_2 + Ae_2}}$ then one has 
\[ \frac12 \abs{\tau_A - \tau_B}^2 = \frac12 \norm{A- B}^2 + O(\norm{A}^2\norm{A-B}^2). \]} there is a constant $0<c<1$ such that 
\begin{equation}\label{eq:energy to excess} \abs{Df(r,t) - Dl_0}^2 \ge c \,\abs{ \vec{T}(x) - \tau_l}^2, \end{equation}
where $T(x)$ denotes the oriented tangent plane of the graph of $f$ at the point $(re^{it}, f(r,t))$. A direct computation using the Fourier expansion of $f$ and $h$ shows that there is a constant $c_0=c_0(\theta_0)$, compare with \cite[Proposition 2.4]{White}, that $\int_\Omega \abs{Df}^2 - \int_{\Omega} \abs{Dh}^2 \ge c_0 \int_\Omega \abs{Df - Dl_0}^2$. Combining it with \eqref{eq:energy to excess} and (b3) we conclude that
\begin{equation}\label{eq:bound1} \int_\Omega \abs{Df}^2 - \int_{\Omega} \abs{Dh}^2 \ge c_0 E^\flat(Z_2, \tau_l) \ge c_0 E^\flat(Z_2, \tau_2).\end{equation}
Hence, if $S_1$ is the graph of $h$ i.e. $S_1 = \mathbf{G}_h\sharp \a{\Omega}$, we conclude by using the expansion for the area that 
\begin{align}\label{eq:bound2}
E(S_1,\bC_1(\tau_2))&\le (1+\epsilon) \frac12 \int_\Omega \abs{Dg}^2 \le (1+\epsilon) \frac12	\int_\Omega \abs{Df}^2 - c_0 E^\flat(Z_2, \tau_2)\nonumber\\
&\le ((1+\epsilon)^2 - c_0) E^\flat(Z_2, \tau_2) \le ((1+\epsilon)^2 - c_0) E^\flat(Z_1, \tau_2) .
\end{align}
We can use the isoperimetric inequality to find a current $S_2\subset \bC_1(\tau_2)$ such that $\partial S_2 = (\bP_{\tau_2}^{-1} \circ \bP)_\sharp (Z_1 - Z_3)$ and 
\[ \bM(S_2) \le C \bM((\bP_{\tau_2}^{-1} \circ \bP)_\sharp (Z_1 - Z_3))^2 \stackrel{(c2)}{\le} C E^2 \le CE E(Z_1,\tau_2). \]
Now we define the competitor current for $\rho = 1- C\epsilon$ fixed as
\[ H:= \rho( S_1+S_2) + (0\cone Z_1)_\infty\res B_1 \setminus \bC_\rho(\tau_2)\]
Hence we have
\begin{align*}
\bM(H)&-\bM((0\cone Z_1)_\infty \res B_1\setminus\bC_\rho(\tau_2))= \bM(H\res \bC_\rho(\tau_2))\\
&\le \rho^2(2\theta_0+1)\pi+ \rho^2 E(S_1,\bC_1(\tau_2)) + \rho^2\bM(S_2)\\
& \le \rho^2(2\theta_0+1)\pi+ \rho^2 ((1+\epsilon)^2 +CE - c_0) E(Z_1,\tau_2).
\end{align*}
We may choose $\epsilon>0$ small enough such that $((1+\epsilon)^2 +CE - c_0) \le (1-\frac{c_0}{2})$. Then we may conclude by appealing to the following computation
\begin{align*}
&\bM(H)-\bM((0\cone Z_1)_\infty\res B_1)\\& \le \rho^2(1	-\frac{c_0}{2})E(Z_1,\tau_2) - \left(\bM((0\cone Z_1)_\infty\res \bC_\rho(\tau_2))- (2\theta_0 +1)\pi\rho^2\right)\\
&= \rho^2(1	-\frac{c_0}{2})E(Z_1,\tau_2) - \rho^2E(Z_1,\tau_2) = \rho^2 \frac{c_0}{2} E(Z_1,\tau_2).
\end{align*}
This proofs \eqref{eq:epiperimetric inequality} because
\begin{align*} \rho^2 E(Z_1,\tau_2)&= \int_{\bC_\rho(\tau_2)} \abs{\vec{T}(x)-\tau_2}^2 d \norm{(0\cone Z_1)}\\ &\le  \int_{B_1} \abs{\vec{T}(x)-\tau_2}^2 d \norm{(0\cone Z_1)}\\&
\le \norm{(0 \cone Z_1)_\infty}(B_1) - \norm{(0\cone R)_\infty}(B_1).
\end{align*}
\end{proof}

\section{Boundary tangent cones and proof of Theorem \ref{thm.uniqueness}}\label{sec.final}
In fact, in this section, we will prove the following more accurate version of Theorem \ref{thm.uniqueness}:
\begin{theorem}\label{thm.uniqueness precise}
	Let $T \in \mathbf{I}_2(\R^{2+n})$ \emph{almost minimizing} in a neighbourhood $U$ of $x$. Assume further that $\partial T\res U = \Gamma \res U$ for a $C^{1,\alpha_1}$-boundary $\Gamma$ and $x \in \Gamma$. Then there are $2$-dimensional planes $\{\pi_i\}_i^N, \pi$ intersecting only in $0$, natural numbers $\{ \theta_i\}_{i=1}^N$, $Q \in \N \cup \{0\}$ such that if we set
	\[ S := (Q \a{\pi} + \a{\pi^+}) + \sum_{i=1}^N \theta_i \a{\pi_i} \],
 we have, for $r< \epsilon_4$,
\begin{align}
\label{eq:flat metric convergence} &\bd_{B_1}(T_{x,r},S) \le C_5\, r^{\alpha_5},\\
\label{eq: Haussdorff converegence} &\dist(\spt(T_{x,r}), \spt(S)) = \frac1r \dist(\spt(T)-x, \spt(S))\le C_5\, r^{\frac{\alpha_5}{3}}\,.
\end{align}
Moreover, there are currents $T^i \in \mathbb{I}^2(B_{\epsilon_4}(x)), i=0, \dotsc, N$ such that 
\begin{itemize}
\item[(i)] $\partial T^0 \res B_{\epsilon_4}(x) = \partial \Gamma\res B_{\epsilon_4}(x)	$,  $\partial T^i \res B_{\epsilon_4}(x) = 0 $ for $i\ge1$\\ and each $T^j$ is \emph{almost minimizing} in $ B_{\epsilon_4}(x)$;
\item[(ii)] $T\res B_{\epsilon_4}(x) = \sum_{i=0}^N T^i\res B_{\epsilon_4}(x)$ and $\norm{T}(B_{\epsilon_4}(x)) = \sum_{i=0}^N \norm{T^i}(B_{\epsilon_4}(x))$;
\item[(iii)] $(Q\a{\pi} + \a{\pi^+})$ is the unique tangent cone of $T^0$ and $\theta_i \a{\pi_i}$ is the unique tangent cone of $T^i$ at $x$.
\end{itemize}

\end{theorem}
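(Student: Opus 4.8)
The plan is to run the scheme of \cite{White} and \cite[Section~4]{DeLellis}, feeding in the boundary versions of the three ingredients already established above: the almost‑monotonicity formula (Proposition~\ref{prop:almost monotonicity formula}), the flat continuity of the slices (Corollary~\ref{cor:continuity of the slices in flat norm}), and the epiperimetric inequality (Proposition~\ref{prop:boundary epi-perimetric}). First I would translate $x$ to the origin and apply the straightening diffeomorphism $\phi$ of Lemma~\ref{lem.straight boundary}; since $\phi$ distorts masses, slices, excess and flat distances only by factors $1+O(r^{\alpha_1})$, it does not affect any Hölder conclusion, so we may assume $\Gamma\cap B_{r_1}=\Gamma_0\cap B_{r_1}$. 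By Remark~\ref{rem:density} the density $\Theta:=\Theta(T,0)$ exists; put $\ell:=\omega_2\Theta$. Exactly the compactness argument used inside the proof of Proposition~\ref{prop:boundary epi-perimetric} shows that any sequence $r_k\downarrow 0$ has a subsequence along which $T_{0,r_k}$ converges, in the flat topology and with convergence of masses, to an area‑minimizing cone $S$ with $\partial S=\a{\Gamma_0}$ and $\norm{S}(B_1)=\ell$, and Lemma~\ref{lem. characterisation of boundary cones} forces $S=(Q\a{\pi}+\a{\pi^+})+\sum_{i=1}^N\theta_i\a{\pi_i}$ with the planes meeting only at $0$. It thus suffices to fix one such subsequential limit $S$ and establish \eqref{eq:flat metric convergence}--\eqref{eq: Haussdorff converegence} along the \emph{full} family $r\to 0$; uniqueness of $S$ is then a by‑product.

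The core is an ODE argument for the normalized mass. Set $D(r):=r^{-2}\norm{T}(B_r)=\norm{T_{0,r}}(B_1)$, $g(r):=e^{C_3r^{\alpha_3}}D(r)$, $\mathcal D(r):=D(r)-\ell$ and $\mathcal E(r):=\norm{0\cone\langle T_{0,r},d,1\rangle}(B_1)-\ell=\tfrac12\mathbf M(\langle T_{0,r},d,1\rangle)-\ell$. Proposition~\ref{prop:almost monotonicity formula} gives $g'\ge 0$, hence $g(r)\ge\ell$ and $\mathcal D(r)\ge -Cr^{\alpha_3}$; differentiating $D$ and using $\mathbf M(\langle T,d,r\rangle)=r\,\mathbf M(\langle T_{0,r},d,1\rangle)$ together with $\norm{T}'(B_r)\ge\mathbf M(\langle T,d,r\rangle)$ yields $\mathcal D'(r)\ge\tfrac2r\bigl(\mathcal E(r)-\mathcal D(r)\bigr)$ for a.e.\ $r$. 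On the other hand, whenever $r<\epsilon_4$, $\langle T_{0,r},d,1\rangle$ is integral, and $\bd_{B_2}(T_{0,r},S)<\epsilon_4$, Proposition~\ref{prop:boundary epi-perimetric} (recalling $\norm{S}(B_1)=\norm{0\cone\langle S,d,1\rangle}(B_1)$) gives $\mathcal D(r)\le(1-\epsilon_4)\mathcal E(r)+C_4r^{\alpha_4}$, whence $\mathcal E(r)-\mathcal D(r)\ge\epsilon_4\mathcal D(r)-Cr^{\min\{\alpha_3,\alpha_4\}}$ once the harmless error terms are absorbed. Combining the two inequalities gives $\mathcal D'(r)\ge\tfrac{2\epsilon_4}{r}\mathcal D(r)-Cr^{\min\{\alpha_3,\alpha_4\}-1}$, so that, choosing $\epsilon_4$ small enough that $\min\{\alpha_3,\alpha_4\}-2\epsilon_4>0$ and integrating $\tfrac{d}{dr}\bigl(r^{-2\epsilon_4}\mathcal D(r)\bigr)\ge -Cr^{\min\{\alpha_3,\alpha_4\}-1-2\epsilon_4}$ on $[r,R]$, I get $\mathcal D(r)\le C_5 r^{\alpha_5}$ with $\alpha_5:=\min\{2\epsilon_4,\alpha_3,\alpha_4,2\alpha_1\}$ --- \emph{provided} $T_{0,r}$ stays $\epsilon_4$‑close to $S$ in $\bd_{B_2}$ all the way down to $r=0$.

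The hard part will be precisely this closeness proviso, i.e.\ ruling out that $T_{0,r}$ drifts away from $S$ between two scales at which it is known to be close; this is the usual ``no‑gap'' step. I would handle it by a continuity argument in $\log r$: Corollary~\ref{cor:continuity of the slices in flat norm} bounds, for $r<s<2r$, $\mathcal F\bigl(s^{1-m}\langle T,d,s\rangle-r^{1-m}\langle T,d,r\rangle\bigr)^2$ by $C\bigl(g(s)-g(r)\bigr)\ln(s/r)\,g(s)+Cs^{2\alpha_1}\le C\mathcal D(2r)+Cr^{2\alpha_1}$, so any dyadic range on which the decay estimate already holds controls both the variation of the slices and --- since $T_{0,r}$ equals $0\cone\langle T_{0,r},d,1\rangle$ up to the monotonicity defect $\int_{B_1}|z^\perp|/|z|\,d\norm{T_{0,r}}$, itself bounded through \eqref{eq:almost monotonicity formula} by a quantity tending to $0$ --- the variation of $\bd_{B_2}(T_{0,r},S)$ on the next dyadic scale. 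Starting from the sequence $r_k\downarrow 0$ along which $T_{0,r_k}\to S$ and feeding this back into the ODE argument scale by scale, a straightforward induction pushes the bound $\mathcal D(r)\le C_5r^{\alpha_5}$ and the closeness down to $r=0$. Summing the slice estimate geometrically gives $\mathcal F\bigl(r^{-1}\langle T,d,r\rangle-\langle S,d,1\rangle\bigr)\le C r^{\alpha_5/2}$, and, adding back the (also decaying) monotonicity defect and using that the cone construction over curves is Lipschitz in the flat norm near $\langle S,d,1\rangle$, one obtains \eqref{eq:flat metric convergence} (after renaming $\alpha_5$). In particular $S$ is the flat limit of $T_{0,r}$ as $r\to0$, so it is unique, and the structure of $S$ is fixed. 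Estimate \eqref{eq: Haussdorff converegence} then follows in the standard way: the interior and boundary almost‑monotonicity formulae give a uniform density lower bound $\norm{T_{0,r}}(B_\rho(y))\ge c\rho^2$ for $y\in\spt(T_{0,r})\cap B_1$, and flat‑closeness together with such a lower bound upgrades to Hausdorff‑closeness at the cost of a power $\tfrac13$, via slicing and the isoperimetric inequality.

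Finally I would produce the decomposition. By the characterization of $S$, its $N+1$ ``sheets'' $(Q\a{\pi}+\a{\pi^+})$ and $\theta_1\a{\pi_1},\dots,\theta_N\a{\pi_N}$ are, outside the origin, at mutual angular distance bounded below, so for some $\rho\le\epsilon_4$ the Hausdorff convergence \eqref{eq: Haussdorff converegence} shows that $\spt(T)\cap\bigl(B_\rho(x)\setminus\{x\}\bigr)$ splits into $N+1$ relatively open, pairwise disjoint pieces, each clustering around the translate of one sheet. Decomposing $T\res B_\rho(x)$ accordingly --- just as the slice $Z$ was split into indecomposable components in Step~1 of the proof of Lemma~\ref{lem.boundary epiperimetric I} --- yields currents $T^0,\dots,T^N$ with $T\res B_\rho(x)=\sum_i T^i\res B_\rho(x)$, $\partial T^0\res B_\rho(x)=\Gamma\res B_\rho(x)$, $\partial T^i\res B_\rho(x)=0$ for $i\ge1$, and $\norm{T}(B_\rho(x))=\sum_i\norm{T^i}(B_\rho(x))$ (the masses add because the supports are disjoint off $x$ and $\norm{T}(\{x\})=0$); each $T^i$ inherits the almost‑minimizing property since any competitor for $T^i$ supported in a ball where the other sheets stay far away can be glued with the remaining $T^j$ into a competitor for $T$. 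Applying the already‑proven part of the theorem to each $T^i$ --- whose tangent cone at $x$ can only be the corresponding single sheet $\theta_i\a{\pi_i}$, respectively $Q\a{\pi}+\a{\pi^+}$ --- gives (i)--(iii) after shrinking $\rho$ to $\epsilon_4$.
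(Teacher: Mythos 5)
Your proposal follows the same overall scheme as the paper (almost monotonicity + epiperimetric inequality $\Rightarrow$ decay of the spherical mass excess $\Rightarrow$ flat decay of the slices and then of $T_{x,r}$ itself $\Rightarrow$ uniqueness, Hausdorff estimate, and the final splitting as in \cite{DeLellis}), but it resolves the central difficulty differently. You fix one subsequential tangent cone $S$ and keep $\bd_{B_2}(T_{x,r},S)<\epsilon_4$ at \emph{all} scales by a no--gap bootstrap: on any range where closeness holds the epiperimetric decay controls, via Corollary \ref{cor:continuity of the slices in flat norm} and the Step-4--type homotopy/cone comparison, the drift of the flat distance, and an open--closed argument in $r$ starting from a scale $r_k$ with $T_{x,r_k}$ very close to $S$ pushes the closeness down to $r=0$. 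The paper avoids this loop entirely: since every tangent cone at $x$ has the same density $\Theta_0$, the right-hand side of \eqref{eq:epiperimetric inequality 2} only involves $\Theta_0\omega_2$, so after covering the (compact) set of tangent cones by finitely many $\bd_{B_2}$-balls one gets the cone-\emph{independent} differential inequality \eqref{eq: scaled back epi-perimetric inequality} for all $r<r_*$ unconditionally, and uniqueness of $S$ drops out at the end from the Cauchy estimate; the flat-decay machinery is then used only once, after the excess decay is already in hand, instead of inside an induction. Your route can be made rigorous, but two points need care: (1) the constant in the decay bound obtained by integrating from the starting scale $r_k$ (and hence your drift bound, which per dyadic scale is of order $\sqrt{\mathcal D(2\rho)+C\rho^{\alpha_3}}$) depends on $r_k$ through $r_k^{-2\epsilon_4}\mathcal D(r_k)$, so you must check that the total drift still tends to $0$ as $k\to\infty$ (it does, because $\alpha_5\le 2\epsilon_4$ and $\mathcal D(r_k)\to0$), and that the conversion from slice closeness to closeness of the full current in $\bd_{B_2}$ (needed to re-apply Proposition \ref{prop:boundary epi-perimetric}) is carried out quantitatively — this is exactly the content of the paper's Step 4 and cannot be reduced to the one-line remark about the cone construction; (2) you are not free to ``choose $\epsilon_4$ small'': it is the constant produced by Proposition \ref{prop:boundary epi-perimetric}, so the exponent bookkeeping should be done by shrinking $\alpha_5$ (as the paper does via $(1+\alpha_5)\le\frac1{1-\epsilon_4}$, $3\alpha_5\le\alpha_4$), not $\epsilon_4$. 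With these adjustments your argument is a valid, if more intertwined, alternative; the paper's compactness device buys a cleaner, non-circular logic at the price of the (easy) observation that all tangent cones share the same mass ratio.
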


\begin{proof}[Proof of Theorem \ref{thm.uniqueness precise}] After translation we may assume without loss of generality that $x=0$.

\emph{Step 1: Blowup and compactness of the set of tangent cones.} Due to the almost monotonicity, \eqref{eq:almost monotonicity formula}, the family $\{T_r\}_{r>0}$ is uniformly bounded in mass on every compact subset $K \subset \R^{2+n}$ i.e. $\limsup_{r\to 0} \norm{T_r}(K) < \infty$. In particular for any sequence $r_k\to 0$ we apply the compactness theorem of integral currents,\cite[Theorem 6.3.3 \& 6.8.2]{Simon} , to extract a subsequence (not relabelled) $r_k$ such that $T_{r_k}$ converges in flat norm to an integral current $S$ i.e. 
\begin{equation}\label{eq:flat norm convergence of blow-up}
\bd_{B_R}(T_{r_k}, S) \to 0 \text{ for all } R>0.
\end{equation} We will show now that 
\begin{itemize}
\item[(i)] $S$ is $2$-dimensional area minimizing cone with $\partial S= \Gamma_0$;
\item[(ii)]	$\norm{T_{r_k}}(B_R) \to \norm{S}(B_R)$.
\end{itemize}
 We will call $S$ a tangent of $T$ in $0$.

Since $\partial  T_{r_k} = \Gamma_{r_k}$ and after a possible rotation $\Gamma_{r_k} \to \Gamma_0$ in $C^{1, \beta}$ for all $\beta < \alpha_1$, we deduce that $\partial S= \Gamma_0$. The fact that $S$ is locally area minimizing follows by the lower semicontinuity of the mass and the almost minimality in $B_R$ of the currents $T_{r_k}$.
 To show (ii), we follow the arguments of Proposition \ref{prop:boundary epi-perimetric}. Fix $R>0$ and note that due to \eqref{eq:flat norm convergence of blow-up}, passing to a further subsequence if necessary, we may assume that $T_{r_k}-S = R_k + \partial Q_k$ in $B_{R+2}$ and $\sum_{k} \norm{R_k}(B_{R+1}) + \norm{Q_k}(B_{R+1}) < \infty$. Hence there is $R<s<R+1$ such that $\limsup_k \bM(R_k\res B_s) + \bM(\langle Q_k,d,s\rangle) = 0$. Since 
 \[ T_{r_k}\res B_s = S\res B_s + R_k\res B_s - \langle Q_k, d, s \rangle + \partial (Q_k \res B_s) \] 
 we deduce as before by the almost minimality of $T$ in $0$ that 
\[ \norm{T_{r_k}}(B_s) \le (1+ C_0 (sr_k)^{\alpha_0}) \left(\norm{S}(B_s) + \norm{R_k}(B_s) + \mathbf{M}(\langle Q_k, d, s\rangle )\right)\,.\]
Together with the lower semi- continuity we have shown (ii). 
In particular from (ii) and remark \ref{rem:density} we deduce for all $s>0$ that 
\begin{equation}\label{eq:density of the tangent cones} \frac{\norm{S}(B_s)}{\omega_2 s^2} = \lim_{k \to \infty} \frac{\norm{T}(B_{sr_k})}{\omega_2 (sr_k)^2} = \Theta(T,0)=:\Theta_0	
\end{equation}

As a classical consequence of the RHS of \eqref{eq:almost monotonicity formula} applied to $S$ we conclude that $S$ is a cone, compare \cite[Theorem 4.5.3]{Simon}.\\

Let $\mathcal{C}$ be the set of all possible tangent cones of $T$ in $0$. Note that by \eqref{eq:density of the tangent cones} we have $\norm{S}(B_r)= \Theta_0 \omega_2 r^2$ for all $S \in \mathcal{C}$ and $r >0$. For $S\in \mathcal{C}$ let $\mathbf{B}_{\frac{\epsilon_S}{2}}(S)=\{ Z \in \mathbf{I}_2(B_3)\colon \bd_{B_2}(Z,S)< \frac{\epsilon_S}{2}\}$ with $\epsilon_S>0$ given by Proposition \ref{prop:boundary epi-perimetric}. The compactness theorem for integral currents implies that the open cover $\{ \mathbf{B}_{\frac{\epsilon_S}{2}}(S) \colon S \in \mathcal{C} \}$ admits a finite sub-cover
$\{ \mathbf{B}_{\frac{\epsilon_i}{2}}(S_i) \colon i =1, \dots, L\}$. Let $\epsilon:= \min_{i} \frac{\epsilon_i}{2}$. By the arguments above there exists a radius $0< r_* \le \epsilon$ such that for every $r<r_*$ we have $T_r \in \mathbf{B}_{\frac{\epsilon_i}{2}}(S_i)$ for some $i$. Furthermore for a.e. $0<s<r_*$ we have $\langle T_s, d, 1\rangle \in \mathbf{I}_1(\partial B_1)$. But since due to corollary \ref{cor:continuity of the slices in flat norm} the map $s\mapsto \langle T_s, d, 1\rangle$ is continuous we deduce that $\langle T_s, d, 1\rangle\in \mathbf{I}_1(\partial B_1)$ for every $0<s< r_*$. In conclusion $T_r$ satisfies the assumptions of Proposition \ref{prop:boundary epi-perimetric} and we deduce the existence of an $H_r \in \mathbf{I}_2(\partial B_1)$ satisfying \eqref{eq:epiperimetric inequality 2}. Scaling back and multiplying by $r^2$ we obtain
\begin{equation}\label{eq: scaled back epi-perimetric inequality}
\norm{T}(B_r) - \Theta_0 \omega_2 r^2 \le (1-\epsilon_4) \left(\norm{0 \cone \langle T, d, r \rangle }(B_r) - \Theta_0\omega_2 r^2 \right) + C_4 r^{2+\alpha_4}\,. 	
\end{equation}

\emph{Step 2: decay of the spherical excess: } Now we follow \cite[Theorem 3]{White}: Set $f(r):= \norm{T}(B_r) - \Theta_0 r^2$. The function $r \mapsto \norm{T}(B_r)$ is monoton and therefore it has non-negative distributional derivative. Hence $f(r)$ is as well differentiable a.e. with a non-negative singular part of its distributional derivative. Choose $\alpha_5>0$ such that $3\alpha_5 \le \alpha_4$ and $(1+\alpha_5)\le \frac{1}{1-\epsilon_4}$. Since $\norm{0\cone \langle T, d, r \rangle}(B_r) = \frac{r^2}{2}\bM(\langle T, d, r\rangle) \le \frac{d}{dr} \norm{T}(B_r)$ we deduce from \eqref{eq: scaled back epi-perimetric inequality} that 
\[ \frac{d}{dr} \left(\frac{f(r)}{r^{2(1+\alpha_5)}}\right) \ge - \frac{C_5 \omega_2 }{\alpha_5} r^{\alpha_5 -1}\,. \]
Integrating in $r$ and using the short hand notation $e(r):= \frac{f(r)}{\omega_2r^2}$ we get 
\begin{equation}\label{eq: spherical excess deday}
	e(r) \le \left(\frac{r}{s} \right)^{2\alpha_5} e(s) + C_5 s^{3\alpha_5} \text{ for all } 0<r<s< r_*\,.
\end{equation}

\emph{Step 3: decay of the flat norm:} We simply combine \eqref{eq: spherical excess deday} with \eqref{eq:modulus of continuity} to deduce that for $ \frac{s}{2}\le r \le s< r_*$ we have (choosing $\alpha_5$ if necessary smaller)
\begin{equation*}
	\mathcal{F}( \langle T_r, d ,1\rangle - \langle T_s, d, 1\rangle )^2 \le 2\bM(i_\sharp T\res B_s\setminus B_r)^2 +  C s^{2\alpha_1} \le C s^{2\alpha_5}.
\end{equation*}
By iteration on dyadic scales we conclude for all $0< r< s< r_*$  
\begin{equation}\label{eq: flat norm decay I}
	\mathcal{F}( \langle T_r, d ,1\rangle - \langle T_s, d, 1\rangle ) \le 2\bM(i_\sharp T\res B_s\setminus B_r) +  C s^{\alpha_1} \le C s^{\alpha_5} \,.
\end{equation}

\emph{Step 4: Proof of \eqref{eq:flat metric convergence} and \eqref{eq: Haussdorff converegence}:} This step is almost identical with \cite[Theorem 3.1, Step 3]{DeLellis}. But since we need to take care of the boundary part we present the whole argument for the convenience of the reader. 

Let us fix $0<r<s<r_*$ and $\epsilon>0$. Furthermore we define the function $f(t,x):=\frac{\abs{x}}{t}$. Note that $f$ is smooth on the set $t\ge \epsilon$. Consider the current $Q_\epsilon:= \left([\epsilon, 1] \times T\right) \res \{ r \le f < s\}$. By the slicing formula we deduce that 
\begin{align*} \langle Q_\epsilon , f , s\rangle - \langle Q_\epsilon, f , r \rangle &= \partial Q_\epsilon - \left((\delta_1 - \delta_\epsilon)\times T \right) \res \{ r \le f < s\} + ([\epsilon , 1 ]\times \Gamma) \res \{ r \le f < s\}\\
&=\partial Q_\epsilon - R_1 + R_\epsilon + R_\Gamma\,,
\end{align*}
where $R_a= \delta_a \times T\res B_{sa}\setminus B_{ra}$ for $a>0$. We claim that for any $0<\rho<r_*$ we have
\begin{equation}\label{eq:slice of Q_epsilon} \langle Q_\epsilon , f , \rho\rangle = -\left(\frac{\abs{x}}{\rho}, x\right)_\sharp (T\res B_\rho\setminus B_{\epsilon \rho}).\end{equation}
Assuming the claim holds true we can argue as follows: Since for $H(t,x):= t \frac{x}{\abs{x}} = t \,i(x)$ one has $H( \frac{\abs{x}}{\rho}, x) = \frac{1}{\rho} x$, one concludes that
\begin{align}\label{eq: flat norm II}
-T_r\res B_1\setminus B_\epsilon + T_s\res B_1\setminus B_\epsilon &= H_\sharp \langle Q_\epsilon , f , s\rangle - H_\sharp\langle Q_\epsilon, f , r \rangle\\\nonumber
&=\partial H_\sharp Q_\epsilon - H_\sharp R_1 + H_\sharp R_\epsilon + H_\sharp R_\Gamma. 
\end{align}
Let us estimate the mass of the single pieces:
\begin{align*} \bM(H_\sharp Q_\epsilon) &= \int_{\epsilon}^1 \int  \abs{\partial_t H \wedge H_\sharp \vec{T}}  \mathbf{1}_{\{ tr\le \abs{x} < ts\}} \,d\norm{T}\,dt \\
&=\int_{\epsilon}^1 \int_{B_{ts}\setminus B_{tr}} t^2 \abs{i(x)\wedge i_\sharp \vec{T}} \,d\norm{T} \, dt \le \int_{\epsilon}^1 t^2 \,\bM(i_\sharp T\res B_{ts}\setminus B_{tr}) \,dt\\ 
&\le C s^{\alpha_5},
\end{align*}
where we used \eqref{eq: flat norm decay I} in the last step. 
For every $0<a\le 1$ we have again by \eqref{eq: flat norm decay I}
\[\bM(H_\sharp R_a) = \bM (a \,i_\sharp T\res B_{sa}\setminus B_{ra} ) \le a^2 C (as)^{\alpha_5}\,. \]
Finally we estimate $R_\Gamma$ as $H_\sharp Q_\epsilon$ using 
\[ \bM(H_\sharp R_\Gamma) \le \int_{\epsilon}^1 t \,\bM(i_\sharp \Gamma\res B_{ts}\res B_{tr}) \, dt \le C s^{\alpha_1}\,.\]
By combing all the estimates and taking the limit $\epsilon \to 0$, we can conclude that \eqref{eq:flat metric convergence} holds true.
It remains to prove \eqref{eq:slice of Q_epsilon}. It is sufficient to check the identity on differential forms of type $\omega_1=\alpha(t) dt \wedge w_1$ and $\omega_2 = \alpha(t) w_2$ with $\alpha \in C^\infty_c(\R), w_i \in \mathcal{D}^{i}(\R^{n+2}), i=1,2$. We will present the idea for $\omega_1$ the calculation for $\omega_2$ is analogous. For a smooth approximation of the identity $\eta_\delta$ we have
\begin{align*}
	&\langle Q_\epsilon , f , \rho\rangle(\omega_1) = \lim_{\delta \to 0 } \int \langle Q_\epsilon , f , s \rangle(\omega_1) \eta_\delta(s-\rho) \, ds = \lim_{\delta \to 0 } Q_\epsilon( \eta_\delta(f-\rho) f^\sharp ds \wedge \omega_1) \\
	&=\lim_{\delta \to 0} -T\left( \int_\epsilon^1 \eta_\delta\left(\frac{\abs{x}}{t}-\rho\right) \frac{\alpha(t)}{t} \, dt \; d\abs{x}\wedge w_1 \right) = -T\left( \alpha\left(\frac{\abs{x}}{\rho}\right)\frac{\mathbf{1}_{B_\rho \setminus B_{\epsilon \rho}}}{\rho}  d\abs{x}\wedge w_1 \right)\\
	&= - -\left(\frac{\abs{x}}{\rho}, x\right)_\sharp (T\res B_\rho\setminus B_{\epsilon \rho})(\omega_1)\;,
\end{align*}
where we used that 
\[ \lim_{\delta \to 0 }\int_\epsilon^1 \eta_\delta\left(\frac{\abs{x}}{t}-\rho\right) \frac{\alpha(t)}{t} = \alpha\left(\frac{\abs{x}}{\rho}\right)\frac{\mathbf{1}_{B_\rho \setminus B_{\epsilon \rho}}}{\rho}\,. \] 

Finally \eqref{eq: Haussdorff converegence} follows from \eqref{eq:flat metric convergence} by the lower density bound, which is for instance a consequence of the inner monotonicity formula, see \cite[Proposition 2.1]{DeLellis}. Since $\Gamma_{r} \to \Gamma_0$ uniformly in $C^{1,\alpha_1}$ we have only to consider points in $\spt(T_r) \setminus \spt(\Gamma_r)$. Let $y \in \spt(T_r) \setminus \spt(\Gamma_r) \cap B_{\frac12}$ and let $\rho:= \dist(y, \spt(S))$. Note that $\rho<\frac12$. Choose $Q, R$ such that $T_r-S = R + \partial Q$ and $\norm{R}(B_1) + \norm{Q} \le 2 \bd_{B_1}(T_r,S) \le 2C_5 r^{\alpha_5}$. By the slicing theory we may fix $\frac \rho2< \sigma \le \rho$ such that $\bM(\langle Q , d_y,   \sigma \rangle) \le \frac2\rho \norm{Q}(B_\rho(y))$ where $d_y(x):=\abs{x-y}$. Hence by our choice of $\rho$ we have 
\[ T_r\res B_{\sigma}(y) = R\res B_{\sigma}(y) - \langle Q, d_y, \sigma\rangle + \partial ( Q\res B_{\sigma}(y))\,. \]
Now \eqref{eq: Haussdorff converegence} follows from the almost minimality of $T$ and the lower density bound, since  
\[ c \sigma^2 \le \norm{T_r}(B_\sigma(y)) \le \bM(R\res B_\sigma(y) - \langle Q, d_y, \sigma \rangle) \le \frac{2}{\sigma} \bd_{B_1}(T_r,S)\,.\]

\emph{Step 5: Decomposition and proof of (i) - (iii):}
This step follows by the very same argument as in \cite[Theorem 3.1, Step 4]{DeLellis}.

\end{proof}


\section{Lipzschitz approximation}\label{sec.approximation}
In this section we prove a small modification of White's Lipschitz approximation in \cite{White}.
We need to ensure that our Lipschitz approximation preserves the boundary data. We will obtain it as corollary from the fact that we are able to produce Lipschitz approximations that preserve symmetries.

Although our treatment is very close to White's original approach we give a complete account to it.

Suppose $Z$ is a rectifiable curve in $\partial \mathbf{C}_1$, i.e. $Z = X_{\sharp}\a{[0,L]}$. Since $\mathbf{p}\circ X: [0,L] \to S^1$ is continuous there is a lift $\theta: [0,L] \to \R$, unique if we assume that $\theta(0) \in [0, 2\pi[$, such that $\mathbf{p}\circ X(t) = e^{i\theta(t)}$ for all $t$. As a consequence we may consider instead of $X$ the curve $x(t)=(\theta(t), y(t)) \in \R\times \R^{n-2}$. From now on we will denote with small letters, like $x(t)=(\theta(t), y(t))$ the lifted curves and with capital letters like $X(t)$ the curve in $\partial \mathbf{C}_1$.

 We equip $\R\times \R^{n-2}$ with the  euclidean metric $ds^2_e= d\theta^2 + dy^2$ and the metric \begin{equation}\label{eq:metric h} ds^2_h= (1+\abs{y}^2) d\theta^2 + dy^2 + \abs{y\wedge dy}^2= ds_e^2 + ds_{\tilde{h}}^2. \end{equation}
where $ds_{\tilde{h}}^2= \abs{y}^2 d\theta^2 + \abs{y\wedge dy}^2$.  
Hence we have for any $v\in T_{(\theta, y)}\R\times \R^{n-1}$ 
\[ \abs{v}^2_e\le \abs{v}^2_h \le (1+\abs{y}^2) \abs{v}^2_e. \]
The following lemma lists some estimates on geodesics in $\R\times \R^{n-2}$ with respect to the metric $h$. 

\begin{lemma}\label{lem.estimates on geodesics}
	Given two points $x(t_i)=(\theta(t_i), y(t_i)), i=1, 2$ then there is a constant speed geodesic $t\in [t_1, t_2] \mapsto x(t)=(\theta(t), y(t))$ between them with 
	\begin{itemize}
		\item[(i)] $\max_{t \in [t_1, t_2]} \abs{y(t)}= \max_{i=1,2} \abs{y(t_i)}$;
		\item[(ii)] \[\dot{\theta}(t)-\frac{\theta(t_2)-\theta(t_1)}{t_2-t_1} \ge - \frac12 H(x)\max_{i=1,2}{\abs{y(t_i)}}\,. \]
	\end{itemize}  
\end{lemma}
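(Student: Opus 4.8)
\textbf{Proof proposal for Lemma \ref{lem.estimates on geodesics}.}

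The plan is to write down the geodesic explicitly in a way that makes both assertions transparent, exploiting the product-like structure of the metric $h$ on $\R \times \R^{n-2}$. First I would observe that the metric $ds_h^2 = (1+\abs{y}^2)\,d\theta^2 + \abs{dy}^2 + \abs{y\wedge dy}^2$ is invariant under translation in the $\theta$-variable; hence $\partial_\theta$ is a Killing field and the quantity $p_\theta := (1+\abs{y}^2)\dot\theta$ is constant along any geodesic. This is the key conserved quantity. I would parametrize by the affine parameter on $[t_1,t_2]$ and use that a minimizing geodesic exists between the two given points (the metric $h$ is complete and comparable to the Euclidean one, so Hopf--Rinow applies), choosing $x(t)$ to be such a constant-speed minimizer.

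For part (i), the claim $\max_t \abs{y(t)} = \max_{i}\abs{y(t_i)}$ should follow from a convexity/maximum-principle argument: if the geodesic left the closed Euclidean ball $\{\abs{y}\le \max_i \abs{y(t_i)}\}$, one could reflect or retract the offending arc to decrease length, contradicting minimality. More precisely I would compute $\frac{d^2}{dt^2}\tfrac12\abs{y(t)}^2$ along the geodesic using the geodesic equations for $h$; the $\abs{y}^2 d\theta^2$ and $\abs{y\wedge dy}^2$ terms are engineered so that this second derivative is nonnegative whenever $\abs{y}$ is near its maximum (the $(1+\abs{y}^2)d\theta^2$ term contributes a favorable sign because $p_\theta$ is conserved and $\dot\theta^2 = p_\theta^2/(1+\abs{y}^2)^2$ decreases as $\abs{y}$ grows). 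Thus $t\mapsto \abs{y(t)}^2$ is subharmonic along the geodesic and attains its maximum at an endpoint, giving (i).

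For part (ii), I would integrate the conservation law: since $(1+\abs{y}^2)\dot\theta \equiv p_\theta$, we get $\dot\theta(t) = \frac{p_\theta}{1+\abs{y(t)}^2}$, while $\theta(t_2)-\theta(t_1) = p_\theta \int_{t_1}^{t_2}\frac{dt}{1+\abs{y(t)}^2}$, so $\dot\theta(t) - \frac{\theta(t_2)-\theta(t_1)}{t_2-t_1}$ has the sign of $p_\theta$ times the deviation of $\frac{1}{1+\abs{y(t)}^2}$ from its average, and the lower bound is controlled by $\abs{p_\theta}$ times the oscillation of $\abs{y}^2$, which by (i) is bounded by $\max_i\abs{y(t_i)}^2$. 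The factor $H(x)$ — which I take to be the (constant) $h$-speed of the geodesic, $H(x)^2 = (1+\abs{y}^2)\dot\theta^2 + \abs{\dot y}^2 + \abs{y\wedge\dot y}^2$, so that $\abs{p_\theta} \le \sqrt{1+\abs{y}^2}\,H(x)$ at any point, in particular $\abs{p_\theta}\le$ (speed) whenever $y=0$, and more generally $\abs{p_\theta}\,\dot\theta \le H(x)^2$ — should enter precisely through this estimate, after bounding $\abs{p_\theta}\bigl(\frac{1}{1+\abs{y}^2} - \text{average}\bigr) \ge -\tfrac12 \abs{p_\theta}\max_i\abs{y(t_i)}^2 \cdot \frac{\abs{p_\theta}}{(\cdots)}$ and absorbing one power of $\abs{p_\theta}/(1+\abs{y}^2) = \dot\theta$ into $H(x)^2/H(x)$. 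The main obstacle I anticipate is getting the constant exactly $\tfrac12$ and correctly identifying what $H(x)$ denotes in the notation of \cite{White}; once the conserved momentum $p_\theta$ and the maximum principle for $\abs{y}$ are in hand, both statements are short computations, but matching the precise normalization of the inequality in (ii) will require care.
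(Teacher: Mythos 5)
Your mechanism for part (ii) — the conserved momentum $p_\theta=(1+\abs{y}^2)\dot\theta$ — is not really different from the paper's: the paper works directly with the $\theta$-component of the geodesic equation, $\ddot\theta=-\tfrac{2(y\cdot\dot y)\dot\theta}{1+\abs{y}^2}$, which is exactly $\tfrac{d}{dt}p_\theta=0$. The genuine gap is in how you estimate. First, you guessed the wrong meaning of $H(x)$: in the paper's proof it is the \emph{energy} of the geodesic, $H(x)=\int_{t_1}^{t_2}\abs{\dot x}_h^2\,dt=(t_2-t_1)\abs{\dot x}_h^2$, not the constant speed. This matters: the right-hand side of (ii) must scale with the length of the parameter interval, and that scaling is what is used later (in Lemma \ref{lem:Lipschitz approximation}, the deviation of $\dot{\tilde\theta}$ from $1$ has to be of order $m\int_{a_i}^{b_i}\abs{\dot{\tilde x}}_h\lesssim mE/\delta^2$; a bound of order $m$ times the speed alone would destroy the Lipschitz estimate (iii)). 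Second, your actual estimate — deviation of $\dot\theta$ from its average bounded by $\abs{p_\theta}$ times the oscillation of $(1+\abs{y}^2)^{-1}$, which you bound by $\max_i\abs{y(t_i)}^2$ — does not imply the stated inequality: take the two endpoints very close to each other with $\abs{y(t_i)}=m$ fixed; then $\tfrac12 H(x)m$ is arbitrarily small while your bound stays of size $m^2$ times the speed. The missing ingredient is precisely the interval length: one must bound the deviation by $\int\abs{\ddot\theta}$, estimate pointwise $\abs{\ddot\theta}\le \tfrac{C}{2}\max_i\abs{y(t_i)}\,\abs{\dot x}_h^2$ via Young's inequality and (i), choose by the mean value theorem a time $s$ with $\dot\theta(s)$ equal to the average, and integrate from $s$ to $t$ — which is the paper's argument. (Even after refining your oscillation bound by the length of the $y$-trace one only recovers the inequality up to a worse constant, as you anticipated.)

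For part (i), the paper does not use a maximum principle "near the maximum": it shows that $t\mapsto\abs{y(t)}^2$ is convex along \emph{every} geodesic, by testing the weak Euler--Lagrange equation of the energy with $v=\varphi y$ and exploiting that the $\tilde h$-part of the metric satisfies $\tilde h_{ij}\dot x^i y^j=0$ and is $2$-homogeneous in $y$, so that $\partial_l h_{ij}y^l=2\tilde h_{ij}$. Your alternative — retracting the $y$-component onto the ball $\{\abs{y}\le\max_i\abs{y(t_i)}\}$ and noting that this strictly shortens any arc leaving the ball (each of the three terms of $ds_h^2$ decreases) — is a legitimate route for a \emph{minimizing} geodesic and would be acceptable, but your heuristic convexity claim as written ("nonnegative whenever $\abs{y}$ is near its maximum") is not a proof and should be replaced either by the retraction argument carried out in detail or by the computation above.
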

 
\begin{proof}
The geodesic is the unique minimizer of the the energy 
\[ H(z)=\int_{t_1}^{t_2} \abs{\dot{z}}^2_h \]
where the infimum is taken over all curves $t \mapsto z(t)$ with $z(t_i)=x(t_i)$. The Euler Lagrange equation is the classical equation for geodesics i.e.
\begin{equation}\label{eq:geodesic equation} \ddot{x}^i(t) + \Gamma^i(x(t))_{jk}\dot{x}^j\dot{x}^k=0.\end{equation}
The weak formulation is given by 
\[0 = \int_{t_1}^{t_2} h_{ij} \dot{x}^i \dot{v}^j + \partial_lh_{ij}\dot{x}^i\dot{z}^jv \quad \forall v \in C^1_c([t_1, t_2], \R\times \R^n).\]
We claim that $\ddot{\abs{y}^2}\ge 0$. This can be seen using $v=\varphi y$ for a nonnegative  $\varphi \in C^1_c([t_1,t_2])$ in the weak formulation: note that since $y\wedge y =0$ we have $\tilde{h}(\theta,y)(\dot{x}, y) =0$ and since $y \mapsto \tilde{h}$ is two homogeneous we have
\[\partial_lh_{ij}y^l = \partial_l\tilde{h}_{ij}y^l= 2\tilde{h}_{ij}. \]
Combining these we have 
\begin{align*}
	0 &= \int_{t_1}^{t_2} \dot{y}\cdot y \,\dot{\varphi} + \abs{\dot{y}}^2_h \varphi + 2 \tilde{h}_{ij}\dot{x}^i \dot{x}^j \varphi \\
	&\ge\int_{t_1}^{t_2} \dot{y}\cdot y \,\dot{\varphi}.
\end{align*}
In conclusion, we have shown that $t \mapsto \abs{y}^2$ is subharmonic hence (i) follows. 
By direct computations one has $\Gamma^\theta_{\theta \theta} =0$ and $\Gamma^\theta_{\theta i} =\frac{y^i}{1+\abs{y}^2}$. Hence by \eqref{eq:geodesic equation}, Young's inequality and (i) we may estimate
\[ \ddot{\theta} = - \frac{y^i\dot{y}^i\dot{\theta}}{1+\abs{y}^2}\ge- \frac12 \max_{i=1,2}\abs{y(t_i)} \abs{\dot{x}}^2_h \]
Recall that $t\mapsto \abs{\dot{x}}_h^2$ is constant  and hence $H(x)= (t_2-t_1) \abs{\dot{x}}_h^2$. Thus we conclude (ii) from the mean value theorem that 
\begin{align*}
	\dot{\theta}(t)-\frac{\theta(t_2)-\theta(t_1)}{t_2-t_1}=\dot{\theta(t)}-\dot{\theta}(s)= \int_s^t \ddot{\theta} \ge - \frac12 H(x) \max_{i=1,2}\abs{y(t_i)}
\end{align*}
\end{proof}

From now on we will consider closed curves i.e. let $X$ be a closed Lipschitz curve in $\partial \mathbf{C}_1$ of length $L$. After a re-parametrisation, we may assume that $X$ is parametrised by arc-length with respect to the metric $\abs{\cdot}_h$ i.e. if $x(t)$ is the corresponding lift to $\R \times \R^{n-2}$ we have 
\[ \abs{\dot{x}(t)}_h = 1 \quad \forall t. \]
Recall that by the constancy lemma and the fact that $X$ is closed we have that $\mathbf{p}\circ X_\sharp \a{[0,L]} = Q \a{S^1}$, for some $Q \in \N$. In particular this implies for the lift $x(t)$ that
\[ Q= \theta(L)- \theta(0) = \int_0^L \dot{\theta} \]
and $x(t)$ extends to a $L$-periodic function in the sense that
\[ x(t+L)=(\theta(t+L), y(t+L)) = (\theta(t)+ 2\pi Q , y(t)) = x(t) + 2\pi Q e_\theta. \]
For such a curve we have the following Lipschitz approximation, compare \cite[Proposition 2.7]{White}.

\begin{lemma}\label{lem:Lipschitz approximation}
There exist $\epsilon_0>0$ such that if 
\begin{itemize}
\item[(1)] $E:=\int_0^L \abs{\dot{x}}_h - \dot{\theta} \le \epsilon_0$;
\item[(2)] $m:=\sup_{t} \abs{y(t)} \le \epsilon_0$.
\end{itemize}
then for each $C\frac{E}{L} < \delta^2 \le 1-\epsilon_0$ there exists $\tilde{y}:\R \to \R^{n-2}$, $Q$-periodic such that 
if $\tilde{x}(t):= (t, \tilde{y}(t)), \tilde{X}(t):=(e^{it}, \tilde{y}(t))$ one has 
\begin{itemize}
\item[(i)] $\tilde{E}:=\int_{0}^Q \abs{\dot{\tilde{x}}}_h - 1 \le E$;
\item[(ii)] $\sup_{t} \abs{\tilde{y}(t)} \le \sup_{t} \abs{y(t)}=m$;
\item[(iii)] $\operatorname{Lip}(\tilde{y}) \le C\left(\delta +\frac{\sqrt{Em}}{\delta}\right)$;
\item[(iv)] $\mathbf{M}( X_\sharp\a{[a,b]} - \tilde{X}_\sharp\a{[\theta(a),\theta(b)]}) \le C \frac{E}{\delta^2}$ for all $0\le a < b \le L$.
\end{itemize}
If in addition $X$ satisfies an axial symmetry along the $e_1$-axes so does $\tilde{X}$ i.e. if
\begin{equation}\label{eq:axial symmetry}
	 x(-t)= -x(t) \quad \forall t \text{ then } \tilde{x}(-t) = -\tilde{x}(t) \quad \forall t.
\end{equation}
\end{lemma}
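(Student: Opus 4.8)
The plan is to follow White's construction of a Lipschitz approximation of a nearly-graphical winding curve, in the form presented in \cite[Proposition 2.7]{White}, keeping track of the extra requirement that an axial symmetry is preserved. First I would reparametrize: write $x(t)=(\theta(t),y(t))$ as the arc-length lift (with respect to $\abs{\cdot}_h$) of $X$, so that $\abs{\dot x}_h\equiv 1$ and $\theta(L)-\theta(0)=2\pi Q$. The hypothesis $E\le\epsilon_0$ forces $\dot\theta$ to be close to $1$ in an integral sense, so $\theta$ is (after the obvious normalization $\theta(0)=0$) a bi-Lipschitz, monotone reparametrization of $[0,L]$ onto $[0,2\pi Q]$; this lets us regard $y$ as a (multivalued, $Q$-periodic) function of the angle variable $\theta$, with only mild, quantitatively controlled distortion of lengths and of the energy $E$.

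Next I would run the standard good-set/bad-set argument of White. Fix the parameter $\delta$ with $C E/L<\delta^2\le 1-\epsilon_0$. Using the maximal function of $\abs{\dot x}_h-\dot\theta$ (or equivalently of $\abs{\dot y}$, which by \eqref{eq:metric h} is comparable since $\abs{y}\le m$ is small), let $G\subset[0,L]$ be the set of points where this maximal function is $\le\delta^2$, and let $B=[0,L]\setminus G$ be the bad set; a weak-$(1,1)$ estimate bounds $\abs{B}\le C E/\delta^2$. On the good set the curve is already $\delta$-Lipschitz after passing to the angle variable, with an additional $\sqrt{Em}/\delta$ gain coming from Lemma \ref{lem.estimates on geodesics}(i)--(ii): the subharmonicity of $\abs{y}^2$ and the bound on $\ddot\theta$ are exactly what upgrades the crude Lipschitz estimate $C\delta$ to $C(\delta+\sqrt{Em}/\delta)$, which is item (iii). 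Then I would redefine $\tilde y$ on each component of the bad set $B$ by replacing the arc of $X$ there with the corresponding constant-speed geodesic of the metric $h$ joining the two endpoints (using Lemma \ref{lem.estimates on geodesics}); reparametrizing everything over the angle variable $t$ gives $\tilde x(t)=(t,\tilde y(t))$, $Q$-periodic. Item (ii), $\sup\abs{\tilde y}\le m$, is immediate from Lemma \ref{lem.estimates on geodesics}(i) since the geodesic replacements do not increase $\abs{y}$; item (i), $\tilde E\le E$, follows because geodesics are energy-minimizing, so replacing arcs by geodesics can only decrease $\int\abs{\dot x}_h$ while $\int\dot\theta=2\pi Q$ is a topological invariant unchanged by the surgery; and item (iv), the mass bound $\mathbf M(X_\sharp\a{[a,b]}-\tilde X_\sharp\a{[\theta(a),\theta(b)]})\le CE/\delta^2$, follows by filling in between $X$ and $\tilde X$ over the bad set with a chain supported near $\spt X$ and estimating its mass by $C\abs{B}\le CE/\delta^2$, together with the observation that on the good set $X$ and $\tilde X$ differ only by the (measure-preserving on currents) reparametrization.

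Finally, the symmetry statement \eqref{eq:axial symmetry}. The key point is that every ingredient of the construction is equivariant under the involution $x\mapsto -x$, i.e. $(\theta,y)\mapsto(-\theta,-y)$: arc-length parametrization, the energy density $\abs{\dot x}_h$ and the angular component $\dot\theta$ are all invariant under $t\mapsto -t$ composed with $x\mapsto -x$, hence the maximal function is even, so the good and bad sets are symmetric, $B=-B$; and the geodesic between $x(t_1)$ and $x(t_2)$ for the $h$-metric, which is isometry-invariant, maps under $x\mapsto -x$ to the geodesic between $-x(t_1)$ and $-x(t_2)$, so the surgery commutes with the involution. Therefore $\tilde x(-t)=-\tilde x(t)$. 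If $0$ is a fixed point of the symmetry one should make the innocuous choice of centering the parametrization and the Fourier truncation symmetrically so that no asymmetry is introduced at the basepoint. The main obstacle I expect is bookkeeping: carrying the quantitative constants consistently through the change of variable between arc-length and angle (so that the hypotheses $C E/L<\delta^2$ and $m,E\le\epsilon_0$ are exactly what is needed), and checking that the geodesic surgery is compatible with the periodic identification $x(t+L)=x(t)+2\pi Q e_\theta$ so that $\tilde y$ is genuinely $Q$-periodic rather than merely piecewise defined.
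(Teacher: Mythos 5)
Your proposal follows essentially the same route as the paper's proof: the maximal-function good/bad-set decomposition with the weak-$(1,1)$ bound $\abs{O}\le CE/\delta^2$, replacement of the curve on the bad intervals by constant-speed geodesics for the metric $h$, Lemma \ref{lem.estimates on geodesics}(i)--(ii) giving the sup bound (ii) and the lower bound on $\dot{\tilde{\theta}}$ that yields (iii), the bad-set length count for (iv), and the symmetry via evenness of $f$ (hence $O=-O$) together with uniqueness/equivariance of the $h$-geodesics under $(\theta,y)\mapsto(-\theta,-y)$. Only cosmetic slips, none affecting the argument: before the surgery $\theta$ need not be monotone or bi-Lipschitz (it is controlled only between good points), the term $\sqrt{Em}/\delta$ in (iii) is a loss coming from the geodesic pieces rather than a gain, and (iv) is a direct mass estimate of the difference current over the bad set, with no filling chain or Fourier truncation involved.
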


\begin{proof}
Consider the non negative $L$-periodic function 
\[ f(t):= \abs{\dot{x}(t)}_h - \dot{\theta}(t) = 1 - \dot{\theta}(t) \]
and the related maximal function 
\[ mf(t):= \sup_{t \in I} \fint_I f. \]
For any $0<\delta<1$ by the classical weak $L^1$-estimate we have for the open set $O:= \{ t \colon mf(t) > \delta^2 \}$ the estimate
\[ \abs{O\cap [0,L]} \le \frac{C}{\delta^2} \int_{-L}^{2L} f \le \frac{3C}{\delta^2} \int_0^L f = C\frac{E}{\delta^2}. \]
Note that by our choice of $\delta$ we have $\abs{O\cap [0,L]} <1$ if $a,b \notin O$ we have 
\begin{equation}\label{eq:Lipschitz estimate part1}
	\fint_a^b f \le \delta^2  \Leftrightarrow \frac{\theta(b)-\theta(a)}{b-a} \ge 1- \delta^2. 
\end{equation}
Hence we conclude that $\abs{X(b)-X(a)}\le \int_a^b \abs{\dot{x}}_e \le \int_a^b \abs{\dot{x}}_h \le (b-a)$ and therefore combined with \eqref{eq:Lipschitz estimate part1}
\begin{equation}\label{eq:Lipschitz estimate part2} \frac{\abs{y(b)-y(a)}}{\abs{\theta(b)-\theta(a)}} \le \frac{\delta}{\sqrt{ 1- \delta^2}}.
\end{equation}
Now let $\tilde{x}$ be the unique minimizer of 
\[ \inf\{ \int_0^L \abs{\dot{z}}_h^2 \colon z(t) = x(t) \forall t \notin O \}. \]
As an open set $O$ is the disjoint union of open intervals i.e.
$O = \bigcup_i ]a_i,b_i[$. Since $f$ is $L$-periodic so is $O$ and since $b_i-a_i < L$ for each $i$ we have that $\tilde{x}$ is as well $L$-periodic. 
Furthermore if $x$ satisfies the axial symmetry we have $f(t)=f(-t)$ and so $O = -O$. But this implies now that the minimizer $\tilde{x}$ will satisfies as well $\tilde{x}(-t)= - \tilde{x}(t)$. 

Note that we have $\tilde{x}(t)= x(t)$ if $t \notin O$ and 
on a interval $]a_i, b_i[$, $\tilde{x}$ is a constant speed geodesic between the points $x(a_i), x(b_i)$. As a consequence of Lemma \ref{lem.estimates on geodesics}, (i), we conclude that $\sup_{t} \abs{\tilde{y}} \le \sup_{t} \abs{y}=m$. Using the minimality of $\tilde{x}$ its consequential constant speed property on intervals $]a_i, b_i[$ we have 
\[ \abs{\dot{\tilde{x}}(t)}_e^2 \le \abs{\dot{\tilde{x}}(t)}_h^2 = \fint_{a_i}^{b_i} \abs{\dot{\tilde{x}}(t)}_h^2 \le \fint_{a_i}^{b_i} \abs{\dot{x}(t)}_h^2 =1 \text{ for } t \in ]a_i,b_i[.\] Furthermore from Lemma \ref{lem.estimates on geodesics}, (ii), we conclude that for each $t \in ]a_i, b_i[$ and \eqref{eq:Lipschitz estimate part1} we have 
\begin{align*} \dot{\tilde{\theta}}(t) &\ge 1-\delta^2 - \frac{1}{2}\sup_{t} \abs{y(t)} \,\int_{a_i}^{b_i} \abs{\dot{\tilde{x}}(t)}_h \\&\ge 1-\delta^2 - \frac{1}{2}m\, (b_i-a_i)\\& \ge 1- \delta^2 - C\frac{mE}{\delta^2}. \end{align*}
We conclude that 
\[ \left(1- \delta^2 - C\frac{mE}{\delta^2}\right)\,\abs{\dot{\tilde{x}}(t)}_e \le \dot{\tilde{\theta}}(t) \text{ for } t \in ]a_i,b_i[. \]
This is equivalent to 
\begin{equation}\label{eq:Lipschitz estimate part3}
\frac{\abs{\dot{\tilde{y}}(t)}^2}{\abs{\dot{\tilde{\theta}}(t)}^2} \le \frac{\delta^2 + C\frac{mE}{\delta^2}}{1- \delta^2 - C\frac{mE}{\delta^2}}.
\end{equation}
Together with \eqref{eq:Lipschitz estimate part2} and $\epsilon_0$ sufficient small, we conclude that $\tilde{x}(t)$ is a Lipschitz graph over $\R$ satisfying (iii) and hence we may find the claimed re-parametrisation $\hat{x}(t) = (t, \hat{y}(t))$.

Finally (iv) follows simply by the estimate on $\abs{O\cap [0,L]}$ since
\begin{align*} &\mathbf{M}( X_\sharp\a{[0,L]} - \tilde{X}_\sharp\a{[0,Q]}) \le \int_{O\cap [0,L]} \abs{\dot{x}}_e + \abs{\dot{\tilde{x}}}_e \le \int_{O\cap [0,L]} \abs{\dot{x}}_h + \abs{\dot{\tilde{x}}}_h\\& \quad\le 2 \int_{O\cap [0,L]} \abs{\dot{x}}_h \le 2 \abs{O\cap [0,L]}\le C \frac{E}{\delta^2}\,.
\end{align*}
\end{proof}

\section{Remarks on semi-calibrated and spherical cross section of an area-minimizing cones}

In \cite{DeLellis} authors showed that, in particular, the following three interesting instances imply the \emph{almost (area) minimality} of the current: area minimizing inside a Riemannian manifold, semicalibrated currents and spherical cross-sections of minimizing currents. 
For the convenience of the reader let us recall their definition:

\begin{definition}\label{def.interesting cases}
	Let $\Sigma \subset \R^{m+n}$ be a $C^2$ submanifold and $U\subset \R^{m+n}$ an open set. An $m$-dimensional integral current $T$ with finite mass and $\spt(T) \subset \Sigma \cap U$ is called 
	\begin{itemize}
	\item[(a)] \emph{area minimizing} in $\Sigma \cap U$ if $\bM(T)\le \bM(T+\partial Q)$ for any $(m+1)$-dimensional current with $\spt(Q)\Subset \Sigma \cap U$.
	\item[(b)] \emph{semicalibrated} in $\Sigma \cap U$ if $\omega_x(\vec{T})=1$ for $\norm{T}$-a.e. $x$ where $\omega$ is a $C^1$ $m$-dimensional semicalibration (in $\Sigma$) i.e. $\norm{\omega_x}_c\le 1$ at every $x \in \Sigma$, where $\norm{\cdot}_c$ denotes the comass norm on $\Lambda^mT_x\Sigma$.
	\item[(c)] a \emph{spherical cross section} of an area-minimizing cone if $\Sigma = \partial B_R$, $U= \R^{m+n}$, and $0\cone T$ is area minimizing.
	\end{itemize}
The almost minimality in the sense of Definition \ref{def.almost minimzing} now follows from the following proposition. The reader should compare with the interior situation covered in \cite{DeLellis} and note that only minor modifications are needed.
\begin{proposition}\label{prop.almost minimality in the interesting cases}
Under the assumptions of Definition \ref{def.interesting cases} and $\partial T\res U = \Gamma \res U$, with $\Gamma \subset \Sigma$ a $m-1$-dimensional $C^{1,\alpha}$-submanifold, we have that $T$ is almost minimimzing in the sense of Definition \ref{def.almost minimzing}. Additionally when cases (b) or (c) apply, then, for any $Q$ supported in $B_r(x)$
\begin{equation}\label{eq.case b +c}
\norm{T}(B_r(x)) \le \norm{T+\partial Q}(B_r(x)) + C \norm{Q}(B_r(x)).
\end{equation}
Furthermore the first variation formulas for any vector field $X\in C^1_c(U, \R^{m+n}$ tangent to $\Gamma$ has the following expressions, respectively,
\begin{itemize}
\item[(a)] $\delta T(X)= - \int X\cdot \vec{H}_T(x) \, d\norm{T}$, where $\vec{H}_T$ can be calculated using the second fundamental form $A_\Sigma$ of $\Sigma$ by $\vec{H}_T(x)=\sum_{i=1}^m A_\Sigma(v_i,v_i)$, with $v_i$ orthonormal basis of $\vec{T}(x)$; 	
\item[(b)] $\delta T(X) = T ((d\omega)\res X)$;
\item[(c)] $\delta T(X)= \int X\cdot m \frac{x}{R^2} \, d\norm{T}$.
\end{itemize}
\end{proposition}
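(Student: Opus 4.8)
The statement asserts three things for each of the three cases: almost-minimality in the sense of Definition \ref{def.almost minimzing}, the improved estimate \eqref{eq.case b +c} in cases (b) and (c), and explicit first-variation formulas. The strategy is to treat the three cases in sequence, in each case comparing $T$ with a competitor $T + \partial Q$ supported in $B_r(x)$, estimating the mass defect via a suitable device: for (a) we use a nearest-point projection onto $\Sigma$, while for (b) and (c) we use Stokes' theorem together with the (semi)calibration structure. Throughout, one must be careful that competitors are required to lie in $\Sigma$ only in Definition \ref{def.interesting cases}, whereas Definition \ref{def.almost minimzing} allows arbitrary competitors in $\R^{m+n}$; this gap is bridged exactly by the projection argument.

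\emph{Case (a).} Let $\Pi$ denote the nearest-point projection onto $\Sigma$, which is a well-defined $C^1$ map on a tubular neighbourhood of $\Sigma$ with $D\Pi(x)=\mathrm{id}$ on $T_x\Sigma$ and $\norm{D\Pi(z)-\mathrm{id}} \le C\,\dist(z,\Sigma) \le C\abs{z-x}$ for $z \in B_r(x)$ once $r$ is small (using $x \in \Sigma$). Given any integral $(m+1)$-current $Q$ supported in $B_r(x)$, the pushforward $\Pi_\sharp Q$ is supported in $\Sigma \cap B_{(1+Cr)r}(x)$ and $\Pi_\sharp(T+\partial Q) = T + \partial(\Pi_\sharp Q)$ since $\spt(T)\subset\Sigma$ and $\Pi$ fixes $\Sigma$. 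Since $\spt(\Pi_\sharp Q) \Subset \Sigma \cap U$ (enlarging $U$ slightly if necessary, or noting the statement is local), area-minimality in $\Sigma \cap U$ gives $\norm{T}(B_r(x)) \le \bM(T+\partial(\Pi_\sharp Q))$. Now the Lipschitz-pushforward mass estimate yields $\bM(T+\partial(\Pi_\sharp Q)) = \bM(\Pi_\sharp(T+\partial Q)) \le (1+Cr)^{m+1}\bM((T+\partial Q)\res B_{2r}(x))$, and since outside $B_r(x)$ one has $T+\partial Q = T$ and $\Pi$ acts as identity there, a localisation gives $\norm{T}(B_r(x)) \le (1+C_0 r^{\alpha_0})\norm{T+\partial Q}(B_r(x))$ with $\alpha_0 = 1$. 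This is precisely \eqref{eq:almost minimizing}.

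\emph{Cases (b) and (c).} Here I would not use projection but the differential-form structure. Write $T - (T+\partial Q) = -\partial Q$. For (b), since $\omega$ is a semicalibration with $\omega_x(\vec T) = 1$ a.e., $\bM(T) = T(\omega)$ and $\bM(T+\partial Q) \ge (T+\partial Q)(\omega)$ by the comass bound; subtracting, $\bM(T) - \bM(T+\partial Q) \le -\partial Q(\omega) = -Q(d\omega) \le \norm{d\omega}_{C^0}\norm{Q}(B_r(x)) \le C\norm{Q}(B_r(x))$, which is \eqref{eq.case b +c}. Restricting the comparison to balls $B_r(x)$ and combining with $\norm{T+\partial Q}(B_r(x)) \ge \bM(T+\partial Q) - \bM(T\res(\R^{m+n}\setminus B_r(x)))$ gives \eqref{eq:almost minimizing} with $C_0 r^{\alpha_0}$ absorbing $C\norm{Q}(B_r(x))/\norm{T}(B_r(x))$ via the density lower bound $\norm{T}(B_r(x)) \ge c r^m$ and the isoperimetric-type bound $\norm{Q}(B_r(x)) \le C r \norm{\partial Q}(B_r(x)) \le Cr(\norm{T}(B_r(x)) + \norm{T+\partial Q}(B_r(x)))$, yielding $\alpha_0=1$. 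Case (c) is the special case of (b) with the semicalibration being the contraction of the volume form of $\partial B_R$ against the position field (equivalently, $0\cone T$ minimizing translates, by a cone computation, into \eqref{eq.case b +c} with constant $C = m/R^2$); alternatively one argues directly from area-minimality of $0\cone T$ and a cone-slice comparison.

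\emph{First-variation formulas.} For a $C^1$ vector field $X$ compactly supported in $U$ and tangent to $\Gamma$, the formulas follow by differentiating the mass along the associated flow $\Phi_t$. In case (a) one must keep $\spt$ inside $\Sigma$, so one replaces $\Phi_t$ by $\Pi \circ \Phi_t$ (or takes $X$ tangent to $\Sigma$); then $\frac{d}{dt}\big|_{t=0}\bM((\Phi_t)_\sharp T) = \delta T(X) = -\int X\cdot \vec H_T\,d\norm{T}$, and the identity $\vec H_T(x) = \sum_i A_\Sigma(v_i,v_i)$ is the standard fact that a current stationary in $\Sigma$ has mean curvature equal to (minus) the trace of $A_\Sigma$ over its tangent plane. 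In case (b), $\delta T(X) = \frac{d}{dt}\big|_{t=0}T((\Phi_t)^\sharp\omega) = T(\mathcal L_X\omega) = T(d(X\lrcorner\,\omega)) + T((d\omega)\lrcorner X) = \partial T(X\lrcorner\,\omega) + T((d\omega)\res X)$; the boundary term vanishes modulo the contribution of $\Gamma$, which is why tangency of $X$ to $\Gamma$ is imposed, leaving $\delta T(X) = T((d\omega)\res X)$. Case (c) follows from (b) with the explicit $\omega$, giving $d\omega = $ (up to sign) $\frac{m}{R^2}$ times the position contracted with the volume form, whence $\delta T(X) = \int X \cdot m\frac{x}{R^2}\,d\norm{T}$.

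\textbf{Main obstacle.} The delicate point is the projection argument in case (a): one must verify that $\Pi_\sharp Q$ is still compactly supported inside $\Sigma \cap U$ (handled by shrinking $r_0$ and using that the statement is local around $x \in \Gamma \subset \Sigma$), that $\Pi$ fixes $\spt(T)$ and $\spt(\Gamma)$ so that $\partial$ commutes correctly, and — most importantly — that the $C^1$ bound $\norm{D\Pi(z) - \mathrm{id}} \le C\abs{z-x}$ holds on $B_r(x)$, which relies on $x$ itself lying on $\Sigma$. Getting the exponent $\alpha_0 = 1$ (rather than something worse) requires this linear decay of $D\Pi$ near $x$, which is exactly where the hypothesis $x \in \spt(T) \subset \Sigma$ enters. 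The boundary behaviour (that competitors keep the correct boundary $\Gamma$, and that $\Gamma \subset \Sigma$ so the projection does not move it) is a bookkeeping matter but must be stated carefully. In cases (b) and (c) there is essentially no obstacle beyond the routine identity $\partial Q(\omega) = Q(d\omega)$ and the density lower bound, both standard.
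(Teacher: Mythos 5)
Your treatment of (a) and of the semicalibrated inequality in (b) follows the same route as the paper (projection onto $\Sigma$, respectively the calibration/Stokes computation, and for the first variation in (b) Cartan's formula with the boundary term killed by tangency of $X$ to $\Gamma$), but the step in which you deduce \eqref{eq:almost minimizing} from \eqref{eq.case b +c} has a genuine gap: the ``isoperimetric-type bound'' $\norm{Q}(B_r(x))\le C r\,\norm{\partial Q}(B_r(x))$ is false for an arbitrary integral $(m+1)$-current supported in $B_r(x)$ as soon as the codimension of $Q$ is at least one (add to $Q$ a large multiple of a closed $(m+1)$-cycle contained in $B_r(x)$: this inflates $\norm{Q}$ without changing $\partial Q$), so the absorption as you wrote it does not go through. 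The repair is exactly the paper's argument: since the competitor $T+\partial Q$ depends only on $\partial Q$, one first discards the trivial case $\mathbf{M}(\partial Q)\ge 2\norm{T}(B_r(x))$ and then replaces $Q$ by the cone $x\cone\partial Q$, which satisfies $\mathbf{M}(x\cone \partial Q)\le \frac{r}{m+1}\mathbf{M}(\partial Q)\le \frac{2r}{m+1}\norm{T}(B_r(x))$; feeding this $Q$ into \eqref{eq.case b +c} and absorbing gives $(1-Cr)\norm{T}(B_r(x))\le\norm{T+\partial Q}(B_r(x))$, i.e.\ $\alpha_0=1$, with no density lower bound needed. Two further points you flag as central are not quite right: in (a) the estimate $\norm{D\Pi(z)-\mathrm{id}}\le C\abs{z-x}$ cannot hold when $\Sigma$ has positive codimension (at $z\in\Sigma$, $D\Pi(z)$ is the orthogonal projection onto $T_z\Sigma$); what is true, and all that is needed, is $\operatorname{Lip}(\Pi)\le 1+C\norm{A_\Sigma}_\infty r$ on $B_r(x)$, which is the bound the paper quotes from \cite{DeLellis}. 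In (b), the comass inequality $\mathbf{M}(T+\partial Q)\ge (T+\partial Q)(\omega)$ is not automatic either, because $\omega$ is a semicalibration only on $\Sigma$ while $\spt(\partial Q)$ need not lie in $\Sigma$; the paper avoids this by invoking \cite[Proposition 1.2]{DeLellis} for \eqref{eq.case b +c}, noting its proof never uses $\partial T\res U=0$.

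The more serious gap is case (c). Your principal route, declaring $T$ semicalibrated by ``the contraction of the volume form of $\partial B_R$ against the position field'', does not work: that contraction has the wrong degree except in trivial dimensions, and in any case a spherical cross-section of an area-minimizing cone is not semicalibrated in general (area minimality of $0\cone T$ does not produce a form with $\omega(\vec T)=1$). The inequality \eqref{eq.case b +c} in case (c) comes from a cone comparison — compare $0\cone T$ with a filling of the competitor and use the elementary relation between the mass of a current in $\partial B_R$ and the mass of the cone over it — which is precisely \cite[Proposition 1.2]{DeLellis}, the result the paper cites; your one-sentence fallback (``cone-slice comparison'') is the right idea but is not carried out, so as written (c) is unproven. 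The same applies to the first-variation formula in (c): the paper obtains it as a special case of (a), using $A_{\partial B_R}(v,w)=-\frac{x}{R^2}\,v\cdot w$, rather than from a nonexistent calibration, and your derivation should be replaced by that argument.
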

\begin{proof}
Let us start with the almost minimality:\\
\emph{in case (a):} it is the calculation presented in \cite[Proof of Proposition 0.4, case (a)]{DeLellis}	 and just using $\operatorname{Lip}(\mathbf{p}) \le 1 + C \norm{A_\Sigma}_\infty r $\\
\emph{in case (b) + (c):} In \cite[Proposition 1.2]{DeLellis} it had been shown that \eqref{eq.case b +c} holds. In its proof it was not used that $\partial T\res U= 0$. Now it remains to show that it implies almost minimality. We may assume that $\bM(\partial Q)< 2 \norm{T}(B_r(x))$ otherwise \eqref{eq:almost minimizing} holds trivially. Furthermore replacing $Q$ by $x\cone \partial Q$ we may  assume that $\bM(Q)\le \frac{r}{m+1}\bM(\partial Q)$. Hence we conclude $\bM(Q)\le \frac{r}{m+1}\bM(\partial Q) \le \frac{2}{m+1} r \norm{T}(B_r(x))$. So we can reabsorb it on the left hand side in \eqref{eq.case b +c} to deduce that
\[(1-Cr) \norm{T}(B_r(x)) \le \norm{T+\partial Q}(B_r(x))\,. \]
It remains to conclude  the first variation formulas: case (a) is classical, see for instance \cite{Simon}. case (c) is a special case of (a) since $A_{\partial B_R}(v,w) = - \frac{x}{R^2} \, v\cdot w$. Finally we give a short proof for the case $(b)$. Let $\varphi_t$ be the flow generated by $X$ hence we conclude using the fact that $T$ is semicalibrated by $\omega$ that for all $t$
\begin{align*}
\norm{(\varphi_t)_\sharp T}(B_r(x)) - \norm{T}(B_r(x)) \ge T\res{B_r(x)}(\varphi_t^\sharp \omega - \omega)\,.
\end{align*}
The left hand side is $t \,\delta T(X) + o(t)$. By the definition of the   Lie-derivative, have $\varphi_t^\sharp \omega - \omega = t \,\mathcal{L}_X\omega + o(t)$. Hence we conclude that $t \,\delta T(X)\ge t\, T\res{B_r(x)}(\mathcal{L}_X \omega)-o(t)$. This holds for all small $t$ so $\delta T(X)= T\res{B_r(x)}(\mathcal{L}_X \omega)$. Since $\mathcal{L}_X\omega = (d\omega)\res X - d(\omega\res X)$ we have 
\[ T\res{B_r(x)}(\mathcal{L}_X \omega) = T((d\omega)\res X) - \Gamma(\omega \res X)\,.\]
Note that since $X$ is tangent to $\Gamma$ we have that $\Gamma(\omega \res X)=0$. This concludes the proof. 
\end{proof}

\end{definition}

\bibliography{boundarycones}
\bibliographystyle{plain}
\end{document}